\theoremstyle{plain}
\newtheorem{theorem}{Theorem}[section]
\newtheorem{prop}[theorem]{Proposition}
\newtheorem{lemma}[theorem]{Lemma}
\theoremstyle{remark}
\newtheorem*{remark}{Remark}
\newcommand{\Ecal}{\mathcal{E}}
\newcommand{\Lcal}{\mathcal{L}}
\newcommand{\Ocal}{\mathcal{O}}
\newcommand{\Scal}{\mathcal{S}}
\newcommand{\Rb}{\mathbb{R}}
\newcommand{\Eb}{\mathbb{E}}
\newcommand{\Cb}{\mathbb{C}}
\newcommand{\Nb}{\mathbb{N}}
\newcommand{\BW}{\mathrm{BW}}
\newcommand{\what}{\widehat}
\newcommand{\T}{\top}
\newcommand{\Var}{\mathrm{Var}}
\newcommand{\tr}{\mathrm{tr}}
\newcommand{\vecop}{\mathrm{vec}}
\newcommand{\vech}{\mathrm{vech}}
\newcommand{\diag}{\mathrm{diag}}
\def\ba#1\ea{\begin{align*}#1\end{align*}} 
\def\banum#1\eanum{\begin{align}#1\end{align}} 
\begin{document}

\begin{frontmatter}
\title{Statistical properties of\\ matrix decomposition factor analysis}
\runtitle{Statistical properties of matrix decomposition factor analysis}

\begin{aug}
\author[A,B]{\fnms{Yoshikazu}~\snm{Terada}\ead[label=e1]{terada.yoshikazu.es@osaka-u.ac.jp}},
\address[A]{Graduate School of Engineering Science,
Osaka University\printead[presep={,\ }]{e1}}

\address[B]{Center for Advanced Integrated Intelligence Research,
RIKEN{}}
\end{aug}

\begin{abstract}
Numerous estimators have been proposed for factor analysis, and their statistical properties have been extensively studied.
In the early 2000s, a novel matrix factorization-based approach, known as Matrix Decomposition Factor Analysis (MDFA), 
was introduced and has been actively developed in computational statistics.
The MDFA estimator offers several advantages, including the guarantee of proper solutions (i.e., no Heywood cases) 
and the extensibility to $\ell_0$-sparse estimation. 
However, the MDFA estimator does not appear to be formulated as a classical M-estimator or a minimum discrepancy function (MDF) estimator, 
and the statistical properties of the MDFA estimator have remained largely unexplored.
Although the MDFA estimator minimizes a loss function resembling that of principal component analysis (PCA),
it empirically behaves more like consistent estimators used in factor analysis than like PCA itself.
This raises a fundamental question: can matrix decomposition factor analysis truly be regarded as ``factor analysis''?
To address this issue, we establish consistency and asymptotic normality of the MDFA estimator.
Recognizing that the MDFA estimator can be formulated as a semiparametric maximum likelihood estimator, 
we surprisingly find that the profile likelihood is given by the squared Bures-Wasserstein distance between the sample covariance matrix and the modeled covariance matrix.
As a consequence, the MDFA estimator is ultimately an MDF estimator for factor analysis.
Beyond MDFA, the same representation holds for a broad class of component analysis methods, including PCA, 
thereby offering a unified perspective on component analysis.
Numerical experiments demonstrate that MDFA performs competitively with other established estimators,
suggesting that it is a theoretically grounded and computationally appealing alternative for factor analysis.
\end{abstract}

\begin{keyword}
\kwd{Factor analysis}
\kwd{matrix factrization}
\kwd{semiparametric model}
\end{keyword}

\end{frontmatter}

\section{Introduction}

Exploratory factor analysis, often referred to as factor analysis, is an important technique of multivariate analysis (\citealp{Anderson:2003}).
Factor analysis is a method for exploring the underlying structure of a set of variables and is applied in various fields.
In factor analysis, we consider the following model for a $p$-dimensional observation $x$:
\banum
x = \mu + \Lambda f + \epsilon, \label{eq:factor-model}
\eanum
where $\mu \in \Rb^p$ is a mean vector, $m$ is the number of factors ($m<p$), $\Lambda \in \Rb^{p\times m}$ is a factor loading matrix, 
$f$ be a $m$-dimensional centered random vector with the identity covariance,  
$\epsilon$ be a $p$-dimensional uncorrelated centered random vector, which is independent from $f$, 
with diagonal covariance matrix $\Var(\epsilon) = \Psi^2 = \diag(\sigma_1^2,\dots,\sigma_p^2)$.
Each component of $f$ and $\epsilon$ are called the common and unique factors, respectively.
The common factors refer to latent variables that influence observed variables, explaining their correlations.
In contrast, the unique factors represent all unique sources of variance in each observed variable independent of common factors.
For example, each unique factor includes a measurement error in each observed variable.

For a constant $c_\Lambda>0$, 
let 
$\Theta_\Lambda:=\{ \Lambda \in \Rb^{p\times m} \mid |\lambda_{jk}| \le c_\Lambda \;(j=1,\dots,p;\;k=1,\dots,m)  \}$ 
be the parameter space for the factor loading matrix $\Lambda$.
For positive constants $c_L,c_U>0$, define the parameter space for $\Psi$ as 
$\Theta_\Psi:=\{ \diag(\sigma_1,\dots, \sigma_p) \mid c_L \le |\sigma_j| \le c_U\;(j=1,\dots,p) \}$.
Let $\Phi = [\Lambda, \Psi] \in \Rb^{p\times (m+p)}$, and define
$\Theta_\Phi = \{\Phi = [\Lambda, \Psi] \mid \Lambda \in \Theta_\Lambda\text{ and }\Psi \in \Theta_\Psi\}$.
For the factor model~(\ref{eq:factor-model}) with $\Phi=[\Lambda, \Psi]$, 
the covariance matrix of $x$ is represented as $\Phi\Phi^{\T} = \Lambda\Lambda^{\T} + \Psi^2$.

We assume that the factor model $(\ref{eq:factor-model})$ is true with 
some unknown parameter $\Phi_\ast = [\Lambda_\ast,\Psi_\ast] \in \Theta_\Phi$.
Let $\Sigma_\ast = \Phi_\ast \Phi_\ast^{\T} =\Lambda_\ast\Lambda_\ast^{\T} + \Psi_\ast^2$ denote the true covariance matrix.
It should be noted that the statistical properties described later still hold as a minimum contrast estimator
even when the factor model $(\ref{eq:factor-model})$ is not true.
Let $(x_1,f_1,\epsilon_1),\dots, (x_n,f_n,\epsilon_n)$ be i.i.d.~copies of $(x,f,\epsilon)$, 
where $(f_1,\epsilon_1),\dots,(f_n,\epsilon_n)$ are not observable in practice. 
Throughout the paper, it is assumed that $n > m+p$.
In factor analysis, we aim to estimate $(\Lambda_\ast,\Psi_\ast)$ 
from the observations $X_n=(x_1,\dots,x_n)^{\T}$.
Here, we note that the factor model $(\ref{eq:factor-model})$ has an indeterminacy.
For example, for any $m\times m$ orthogonal matrix $R$, 
a rotated loading matrix $\Lambda_\ast R$ can also serve as a true loading matrix.
Thus, let $\Theta_\Phi^\ast = \{\Phi \in \Theta_\Phi \mid \Sigma_\ast = \Phi\Phi^{\T}\}$ be 
the set of all possible true parameters.

There are several estimation approaches to estimate the parameter $\Phi=[\Lambda,\Psi]$, e.g., 
maximum likelihood estimation, least-squares estimation, generalized least-squares estimation (\citealp{Joreskog:1972}), 
minimum rank factor analysis (\citealp{tenBergeKiers:1991}), and non-iterative estimation (\citealp{IharaKano:1986,Kano:1990}).
The theoretical properties of these estimation approaches have been extensively studied
(e.g., \citealp{Anderson:1956, Browne:1974,Kano:1983,Shapiro:1984,Kano:1986a, Anderson:1988,Kano:1991,ShapirotenBerge:2002}).
Moreover, most of these estimators can be formulated as minimum discrepancy function estimators.
Thus, we can apply the general theory of minimum discrepancy function estimators 
to derive the theoretical properties of the estimators (\citealp{Shapiro:1983, Shapiro:1984, Shapiro:1985a,Shapiro:1985b}).

One might think that the maximum likelihood estimator is the best choice from the viewpoint of efficiency. 
However, it is well known that the maximum likelihood estimator, while robust to distributional assumptions, 
can perform poorly in the presence of certain types of model misspecification in (\ref{eq:factor-model}).
For more details, see \cite{MacCallumTucker91}, \cite{BriggsMacCallum03}, and \cite{MacCallumEtAl07}.
Thus, other estimators could be better choices than the maximum likelihood estimator in practice.
Some researchers recommend using the ordinary least squares estimator in exploratory factor analysis.

In the early 2000s, a novel estimator based on matrix factorization was developed for factor analysis (\citealp{Socan:2003,deLeeuw:2004}).
According to \cite{Adachi:2018}, this method was originally developed by Professor Henk A.~L.~Kiers and first appeared in Socan's dissertation (\citealp{Socan:2003}).
This method is called matrix decomposition factor analysis (MDFA for short).
The MDFA algorithm always provides proper solutions (i.e., no Heywood cases in MDFA); 
thus, it is computationally more stable than the maximum likelihood estimator.
From the aspect of computational statistics, matrix decomposition factor analysis has been well-studied, 
and several extensions have been developed
(see, e.g., \citealp{Unkel:2010,Trendafilov:2011,Trendafilov:2013,Stegeman:2016,Adachi:2022,ChoHwang:23,Yamashita:24}).
An important extension for high-dimensional data is the sparse estimation of matrix decomposition factor analysis with the $\ell_0$-constraint.
Although the sparse estimation with the $\ell_0$-constraint is less biased than other sparse regularizations,
the optimization process with the $\ell_0$-constraint is generally challenging.
There is no sparse version of classical factor analysis with the $\ell_0$-constraint.
Surprisingly, the sparse MDFA estimator with the $\ell_0$-constraint can be easily obtained, 
as described in Section~\ref{sec:MDFA}.

In matrix decomposition factor analysis, the estimator is obtained by minimizing the following principal component analysis-like loss function:
\[
\Lcal_n(\mu, \Lambda, \Psi, F,E)=\frac{1}{n}\sum_{i=1}^n \|x_i - (\mu + \Lambda f_i + \Psi e_i)   \|^2,
\]
where $e_i= (e_{i1},\dots,e_{ip})^{\T}$, $E = (e_1,\dots,e_n)^{\T} \in \Rb^{n\times p}$, and $F = (f_1,\dots,f_n)^{\T} \in \Rb^{n\times m}$.
As described in Section~\ref{sec:MDFA}, certain constraints are imposed on the common factor matrix $F$ and the normalized unique factor matrix $E$.
It is known that we cannot consider the maximum likelihood estimation to the problem of simultaneous estimation of $(\Lambda, \Psi)$ and latent factor scores $f_1,\dots,f_n$ (see Section 7.7 and Section 9 of \cite{Anderson:1956}).
However, interestingly, we consider the simultaneous estimation in matrix decomposition factor analysis, 
and the MDFA estimator can be interpreted as the maximum likelihood estimator of the semiparametric model.
More details on this point are provided in Section~\ref{sec:MDFA}.

It is important to note that the loss function of principal component analysis can be written as
\[
\Lcal_{\mathrm{PCA}}(\mu,\Lambda,F)=\frac{1}{n}\sum_{i=1}^n \|x_i - (\mu + \Lambda f_i)  \|^2,
\]
with the same constraints on $F$.
The equivalence between this formulation and other standard formulations of principal component analysis can be found in \cite{Adachi:2016}.

This formulation clearly shows that the term $\Psi e_i$ is the only difference between principal component analysis and matrix decomposition factor analysis.
Although the loss function of matrix decomposition factor analysis is very similar to that of principal component analysis,
the MDFA estimator empirically behaves like other consistent estimators used in factor analysis rather than principal component analysis.
In fact, \cite{Stegeman:2016} and \cite{Adachi:2018} empirically demonstrate that the matrix decomposition factor analysis provides very similar results 
with the classical consistent estimators, such as the maximum likelihood estimator.
For high-dimensional data, 
it is well-known that principal component analysis and factor analysis are approximately the same (e.g., \cite{Bentler:1990} and Section~2.1 of \cite{Fan:2013}). 
Thus, we can expect the MDFA estimator to exhibit similar behavior in high-dimensional settings. 
On the other hand, even in low-dimensional cases, matrix decomposition factor analysis yields results close to those of other consistent estimators 
for factor analysis.

Unlike classical factor analysis, matrix decomposition factor analysis treats
the common factors $F$ and normalized unique factors $E$ as parameters that are estimated simultaneously 
with $\Phi=[\Lambda,\Psi]$.
The number of parameters linearly depends on the sample size $n$, 
and the standard asymptotic theory of classical M-estimators cannot be directly applied 
to analyze its theoretical properties.
As a result, the statistical properties of the MDFA estimator have yet to be discussed, 
leading to the open problem: Can matrix decomposition factor analysis truly be regarded as ``factor analysis''?

In this paper, we establish the statistical properties of matrix decomposition factor analysis to answer this question.
We show that as the sample size $n$ goes to infinity, 
the MDFA estimator converges to the true parameter $\Phi_\ast \in \Theta_\Phi^\ast$.
First, we formulate the MDFA estimator as the semiparametric profile likelihood estimator 
and derive the explicit form of the profile likelihood.
Surprisingly, we discover that the profile likelihood is {\it the squared Bures-Wasserstein distance} between the sample covariance matrix and the modeled covariance matrix (e.g., \cite{BhatiaEtAl2019}).
Then, we show the statistical properties of matrix decomposition factor analysis.
All proofs are provided in the supplementary material.

Throughout the paper, 
let us denote by $\lambda_j(A)$ the $j$th largest eigenvalue of a symmetric matrix $A$.
Let $\|\cdot\|_2$ and $\|\cdot\|_F$ represent the operator norm and the Frobenius norm for a matrix, respectively.
Let $I_p$ denote the identity matrix of size $p$, and let $O_{p\times q}$ denote the $p\times q$ matrix of zeros.
The $p$-dimensional vectors of ones will be denoted by $1_p$, and 
the vectors of zeros will be denoted by $0_p$.
We will denote by $\Ocal(p\times q)$ the set of all $p\times q$ column-orthogonal matrices 
and will denote by $\Ocal(p)$ the set of all $p\times p$ orthogonal matrices.

\section{Matrix decomposition factor analysis (MDFA)}
\label{sec:MDFA}

We will briefly describe the matrix decomposition factor analysis proposed independently by \cite{Socan:2003} and \cite{deLeeuw:2004}.
Without loss of generality, the data matrix $X_n$ is centered by the sample mean, and we ignore the estimation of the mean vector $\mu$.
For simplicity of notation, we use the same symbol $X_n$ for the centered data matrix.
Let $F_n = (f_1,\dots,f_n)^{\T}$ and $\Ecal_n = (\epsilon_1,\dots,\epsilon_n)^{\T}$.
The factor model $(\ref{eq:factor-model})$ can be expressed in the matrix form:
$
X_n = F_n\Lambda^{\T} + \Ecal_n.
$
From this representation, 
we can naturally consider the following matrix factorization problem:
\[
X_n \approx F\Lambda^{\T} + E\Psi,
\]
where $F \in \Rb^{n\times m}$ and $E \in \Rb^{n\times p}$ are constrained by 
\banum
1_n^{\T}F =0_m^{\T},\; 1_n^{\T}E = 0_p^{\T},\;
\frac{1}{n}F^{\T}F = I_m,\; \frac{1}{n}E^{\T}E = I_p, 
\text{ and }
F^{\T}E = O_{m\times p}.
\label{eq:const}
\eanum
In the constraint~(\ref{eq:const}), 
the first two conditions are empirical counterparts of the assumptions $\Eb[f] = 0_{m}$ and $\Eb[\epsilon] = 0_p$.
Moreover, the remaining conditions correspond to the covariance constraints for $f$ and $\epsilon$.
This matrix factorization approach is called the matrix decomposition factor analysis (MDFA).
Here, we note that the factor $1/n$ can be replaced by the factor $1/(n-1)$ in the constraint~(\ref{eq:const}).
This modification is essential in practice, as will be described later.

Let $\Theta_Z := \{Z=[F,E] \in \Rb^{n \times (m+p)} \mid Z \text{ satisfies the constraint } (\ref{eq:const})\}$.
In matrix decomposition factor analysis,
the estimator $(\what{\Lambda}_n,\what{\Psi}_n,\what{F}_n,\what{E}_n)$ is obtained by minimizing 
the following loss function over $\Phi=[\Lambda,\Psi] \in \Theta_\Phi$ and $Z=[F,E] \in \Theta_Z$:
\banum
\Lcal_n(\Phi, Z) 
= \frac{1}{n}\left\|X_n - (F\Lambda^{\T} + E\Psi) \right\|_F^2
=  \frac{1}{n}\sum_{i=1}^n\|x_i - (\Lambda f_i + \Psi e_i)\|^2.
\label{eq:empirical-loss}
\eanum

Thus, we can formulate the matrix decomposition factor analysis as the maximum likelihood estimation of the following semiparametric model:
\banum
x_i = \Lambda f_i + \Psi e_i + \xi_i\quad(i = 1,\dots,n),
\label{eq:mdfa-mle}
\eanum
where both common and unique factor score vectors $f_1,\dots,f_n,e_1,\dots,e_n$ are fixed vectors satisfying the constraint~(\ref{eq:const}), and 
$\xi_1,\dots,\xi_n$ are independently distributed according to a $p$-dimensional centered normal distribution with a known variance $\tau_0^2I_p$.
Interestingly, as shown in Section~7.7 of \cite{Anderson:1956}, 
we cannot consider the maximum likelihood estimator when only the common factor score vectors are fixed.
Specifically, we cannot consider the maximum likelihood estimation of the following model:
\[
x_i = \Lambda f_i + \epsilon_i \quad(i = 1,\dots,n),
\]
where the common factor score vectors $f_1,\dots,f_n$ are fixed vectors satisfying the constraint~(\ref{eq:const}) for $F$, 
and $ \epsilon_1,\dots, \epsilon_n$ are independently distributed according to 
a $p$-dimensional centered normal distribution with unknown diagonal covariance $\Psi^2$.

Now, we recall that the principal component analysis can be formulated as the minimization problem of the following loss function
with constraints $1_n^{\T}F = 0_m^{\T}$ and $F^{\T}F/n = I_m$:
\[
\frac{1}{n}\left\|X_n - F\Lambda^{\T} \right\|_F^2.
\]
The only difference between matrix decomposition factor analysis and principal component analysis is the term $E\Psi$ 
in the loss function (\ref{eq:empirical-loss}).
When we impose the constraint that $\Lambda^{\T}\Lambda$ is a diagonal matrix whose diagonal elements are arranged in decreasing order 
to identify the parameter $\Lambda$ uniquely,
the principal component estimator can be represented as
\[
\what{\Lambda}_\mathrm{PCA} = L_m\Delta_m/\sqrt{n}\;\text{ and }\;
\what{F}_\mathrm{PCA} = \sqrt{n}K_m,
\]
where $\Delta_m$ is a diagonal matrix with the first $m$ singular values of $X_n$, 
and $K_m \in \Ocal(n\times m)$ and $L_m \in \Ocal(p \times m)$ are 
the matrices of first $m$ left singular vectors and right singular vectors of $X_n$, respectively.

\cite{Adachi:2018} shows several essential properties of matrix decomposition factor analysis.
Here, we will introduce some of these properties.
The loss function $\Lcal_n$ can be written as follows:
\banum
\Lcal_n(\Phi, Z) 
&= \frac{1}{n}\|X_n\|_F^2 + \|\Phi\|_F^2 - \frac{2}{n}\tr\{(X_n\Phi)^{\T} Z\} \label{eq:for-Z}\\
&= \frac{1}{n}\|X_n - ZZ^{\T} X_n/n\|_F^2 + \|X_n^{\T} Z/n - \Phi\|_F^2.\label{eq:for-Phi}
\eanum

Now, we consider minimizing the loss function $\Lcal_n$.
Let $\what{K}(\Phi)\what{\Delta}(\Phi)\what{L}(\Phi)^{\T}$ be the singular value decomposition of $X_n \Phi/\sqrt{n}$,
where $\what{\Delta}(\Phi)$ is the diagonal matrix with the singular values, 
and $\what{K}(\Phi)\in \Ocal(n\times p)$ and $\what{L}(\Phi) \in \Ocal((m+p)\times p)$ 
are the matrix of the left singular vectors and the matrix of the right singular vectors, respectively.
Let $\what{S}_n = X_n^{\T} X_n/n$ be the sample covariance matrix,
and then the spectral decomposition of $\Phi^{\T} \what{S}_n \Phi$ can be written as
\[
\Phi^{\T} \what{S}_n \Phi = \left(X_n \Phi/\sqrt{n}\right)^{\T}\left(X_n \Phi/\sqrt{n}\right) 
= \what{L}(\Phi) \what{\Delta}(\Phi)^2 \what{L}(\Phi)^{\T}.
\]
From (\ref{eq:for-Z}), it follows that, for given $\Phi$, the following $\what{Z}(\Phi)$ attains the minimum of $\Lcal_n(\Phi,Z)$:
\banum
\what{Z}(\Phi) 
&= \sqrt{n}\what{K}(\Phi)\what{L}(\Phi)^{\T} + \sqrt{n}\what{K}_\perp(\Phi)\what{L}_\perp(\Phi)^{\T},
\label{eq:Z-min}
\eanum
where $\what{K}_\perp(\Phi) \in \Ocal(n\times m)$ and $\what{L}_\perp(\Phi) \in \Ocal((m+p)\times m)$ are 
column-orthonormal matrices such that $1_n^\top\what{K}_\perp(\Phi) = 0_m^\top$ and 
$
\what{K}(\Phi)^{\T}\what{K}_\perp(\Phi) =  \what{L}(\Phi)^{\T}\what{L}_\perp(\Phi) = O_{p\times m}.
$
It is important to note that $\what{K}_\perp(\Phi)$ and $\what{L}_\perp(\Phi)$ are not uniquely determined.

For given $Z = [F,E] \in \Theta_Z$, the minimization of $\Lcal_n$ with $\Phi$ is obvious.
From (\ref{eq:for-Phi}), we conclude that, for given $Z \in \Theta_Z$, 
the optimal $\what{\Phi}(Z)$ is given by
\banum
\what{\Phi}(Z) = \Big[\what{\Lambda}(Z), \what{\Psi}(Z)\Big]  = \big[X_n^{\T} F/n, \diag(X_n^{\T} E/n)\big],
\label{eq:Phi-min}
\eanum
where $ \diag(X_n^{\T} E/n)$ is the diagonal matrix with diagonal elements of $X_n^{\T} E/n$.
That is, $\what{\Lambda}(Z) = X_n^{\T} F/n$ and $\what{\Psi}(Z) = \diag(X_n^{\T} E/n)$. 

Therefore, the minimization problem for $\Lcal_n$ can be solved by a simple alternating minimization algorithm.
An algorithm of matrix decomposition factor analysis is summarized in Algorithm~\ref{algo:MDFA}.
\begin{algorithm}
\caption{An algorithm of matrix decomposition factor analysis.}
\label{algo:MDFA}
\begin{algorithmic}[1]
\STATE Initialize $t= 0$ and $\Phi_{(0)} = \Big[\Lambda_{(0)}, \Psi_{(0)} \Big]\in \Theta_\Phi$.
\STATE Update \( t = t + 1 \). By the singular value decomposition of $X_n\Phi_{(t-1)}/\sqrt{n}$,
		update the parameter $Z$ as follows:
		\[
		 \what{Z}_{(t)} = \Big[\what{F}_{(t)} , \what{E}_{(t)} \Big]
		 = \sqrt{n}\what{K}(\Phi_{(t-1)})\what{L}(\Phi_{(t-1)})^{\T} + \sqrt{n}\what{K}_\perp(\Phi_{(t-1)})\what{L}_\perp(\Phi_{(t-1)})^{\T}.
		\]\STATE  Update the parameter $\Phi$ as follows:
		\[
		 \what{\Phi}_{(t)} = \Big[ \what{\Lambda}_{(t)},\what{\Psi}_{(t)}\Big]
		  = \Big[ X_n^{\T}\what{F}_{(t)}/n, \diag\big(X_n^{\T}\what{E}_{(t)}/n\big)\Big].
		\]
\STATE Repeat Steps 2 and 3 until convergence.
\end{algorithmic}
\end{algorithm}

\begin{remark}\label{remark:algorithm}
For estimating both $\Phi$ and $Z$, the original data matrix $X_n$ is necessary.
However, when only the estimator for $\Phi$ is needed (i.e., the estimator of $Z$ is unnecessary),
\cite{Adachi:2012} shows that the algorithm can be performed using only the sample covariance matrix $\what{S}_n$.
Thus, for estimating only $\Phi$, the computational cost of matrix decomposition factor analysis only depends on the dimension $p$.
For more details of this algorithm, see \cite{Adachi:2018}.
\end{remark}

Due to the simplicity of the loss function, 
we can consider the sparse estimation of matrix decomposition factor analysis with the $\ell_0$-constraint $\|\Lambda\|_0\le C$,
where $\|\Lambda\|_0$ is the number of nonzero elements of $\Lambda$.
In Step 3, we keep the top $C$ largest elements in terms of absolute value and set others to zero.
This modified algorithm provides a solution for the minimization of the loss $\Lcal_n$ under the $\ell_0$-constraint $\|\Lambda\|_0\le C$.
This method is called the cardinality-constrained matrix decomposition factor analysis.
For more details, see \cite{Adachi:2016}.

\section{Asymptotic properties of matrix decomposition factor analysis}
\subsection{Main idea}\label{sec:idea}

First, we will describe the main idea for proving asymptotic properties of matrix decomposition factor analysis.
Remark~\ref{remark:algorithm} indicates the possibility that the loss function $\Lcal_n$ concentrated on $\Phi$ can be rewritten 
using the sample covariance matrix $\what{S}_n$ instead of the data matrix $X_n$.
The following lemma shows that the concentrated loss function $\Lcal_n(\Phi)=\min_{Z\in \Theta_Z} \Lcal_n(\Phi,Z)$ 
has the explicit form with the sample covariance matrix.
By considering $Z=[F,E]$ as the nuisance parameter in the semiparametric model (\ref{eq:mdfa-mle}), 
this loss $\Lcal_n(\Phi)$ is related to the concentrating-out (or profile likelihood) approach (\citealp{Newey:1994, Murphy:2000}).
That is,  this loss $\Lcal_n(\Phi)$ is the negative profile likelihood for the semiparametric model (\ref{eq:mdfa-mle}).
\begin{prop}\label{lemma:marginal-loss}
For any $\Phi \in \Theta_\Phi$, 
\ba
\Lcal_n(\Phi)
&=\min_{Z\in \Theta_Z} \Lcal_n(\Phi,Z) = \tr(\what{S}_n)+ \tr(\Phi\Phi^\top) - 2\tr\left\{(\Phi^\top \what{S}_n\Phi)^{1/2}\right\}\\
&=\tr\bigl(\what{S}_n\bigr) + \tr\left\{ \Sigma(\Phi)\right\} -2\tr\left\{\left(\what{S}_n^{1/2}\Sigma(\Phi)\what{S}_n^{1/2}\right)^{1/2}\right\}
=d_{\BW}^2\left(\what{S}_n,\Sigma(\Phi)\right),
\ea
where $\Sigma(\Phi):=\Phi\Phi^T$ and $d_{\BW}(A,B)$ is the Bures-Wasserstein distance between positive semidefinite matrices $A$ and $B$.
\end{prop}
Suprisingly, the second line is known as {\it the squared Bures-Wasserstein distance between positive definite matrices}. 
Therefore, the MDFA estimator is finally formulated as a minimum discrepancy function estimator.
In \cite{Adachi:2012} and \cite{Adachi:2018}, 
the focus is primarily on the minimization algorithm and the descriptive properties of the minimizers.
Thus, the above representation of the concentrated loss is not obtained in the existing papers,
although their descriptive properties are essential to derive this representation.

\begin{remark}\label{remark:CA}
This result holds for any component analysis with the following loss function:
\[
\mathcal{L}_{\mathrm{CA}}(\Gamma, Z) = \|X - Z\Gamma^\top\|_F^2 \; \text{ subject to } \; Z^\top Z = I_q\;\text{ and }\; 1_n^\top Z = 0_q,
\]
where \( Z \) is an \( n \times q \) column-orthonormal matrix ($n > q$), and \( \Gamma \) is a \( p \times q \) parameter matrix.
That is, we have
\[
\mathcal{L}_{\mathrm{CA}}(\Gamma) = \min_Z \mathcal{L}_{\mathrm{CA}}(\Gamma, Z) = d_{\mathrm{BW}}^2\left(\widehat{S}_n, \Gamma\Gamma^\top\right).
\]
In this sense, the result offers a unified perspective on component analysis.
In particular, principal component analysis (PCA) can also be interpreted as the following MDF estimator using the squared Bures-Wasserstein distance:
\[
\mathcal{L}_{\mathrm{PCA}}(\Lambda) = \min_{F} \frac{1}{n} \|X - F\Lambda^\top\|_F^2 = d_{\mathrm{BW}}^2\left(\widehat{S}_n, \Lambda\Lambda^\top\right).
\]
A comparison of the following loss of MDFA with that of PCA highlights the differences between the two approaches more clearly:
\[
\Lcal_{\mathrm{MDFA}}(\Lambda,\Psi) :=\Lcal_n(\Lambda,\Psi) = d_{\mathrm{BW}}^2\left(\widehat{S}_n, \Lambda\Lambda^\top + \Psi^2\right).
\]
\end{remark}
\begin{remark}\label{remark:unbias}
When we replace the factor $1/n$ with $1/(n-1)$ in the loss fucntion~(\ref{eq:empirical-loss}) and the constraint~(\ref{eq:const}), 
the sample covariance matrix $\what{S}_n$ is replaced by the unbiased covariance matrix $\what{U}_n=X^{\T}X/(n-1)$ in the explicit form of $\Lcal_n(\Phi)$.
This modification does not affect the asymptotic properties of the MDFA estimator but improves the finite-sample performance.
\end{remark}

\subsection{Population-level loss function and its properties}
\label{sec:pop-lev}

From Proposition~\ref{lemma:marginal-loss}, 
the population-level loss function can be defined as the squared Bures-Wasserstein distance between the true covariance and the modeled covariance:
\ba
\Lcal(\Phi)
= d_{\BW}^2\left(\Sigma_\ast,\Sigma(\Phi)\right)
=\tr\bigl(\Sigma_\ast \bigr) + \tr\left\{ \Sigma(\Phi)\right\} -2\tr\left\{\left(\Sigma_\ast^{1/2}\Sigma(\Phi)\Sigma_\ast^{1/2}\right)^{1/2}\right\}.
\ea
The population-level matrix decomposition factor analysis is identifiable up to the indeterminacy of the factor model. 
\begin{prop}[Identifiability of the population-level MDFA]\label{prop:identifiability}
For any $\Phi = [\Lambda, \Psi] \in \Theta_\Phi$, the following two conditions are equivalent:
\[
\text{\rm (i)}\; \Sigma_\ast = \Phi\Phi^\T = \Lambda \Lambda^\T + \Psi^2,\;\text{ and }\;\text{\rm (ii)}\; \Lcal(\Phi) = 0.
\]
\end{prop}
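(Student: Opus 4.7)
The plan is to establish each direction by direct algebra, exploiting the structural fact that every $\Phi = [\Lambda,\Psi] \in \Theta_\Phi$ has full row rank $p$. This holds because the diagonal block $\Psi$ has entries with absolute value at least $c_L > 0$, so $\Phi\Phi^{T} = \Lambda\Lambda^{T} + \Psi^2 \succeq \Psi^2 \succ 0$. Consequently, the Moore--Penrose inverses admit the closed forms $\Phi^+ = \Phi^{T}(\Phi\Phi^{T})^{-1}$ and $(\Phi^{T})^+ = (\Phi\Phi^{T})^{-1}\Phi$, and satisfy the useful identities $[(\Phi^{T})^+]^{T} = \Phi^+$, $(\Phi^{T})^+ \Phi^{T} = I_p$, and $\Phi \Phi^+ = I_p$. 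I would then decompose $\Lcal(\Phi) = T_1(\Phi) + T_2(\Phi)$ into the trace piece and the squared Frobenius piece; both are nonnegative, so $\Lcal(\Phi) = 0$ is equivalent to $T_1(\Phi) = T_2(\Phi) = 0$.

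For (i) $\Rightarrow$ (ii), substitute $\Sigma_\ast = \Phi\Phi^{T}$. Then $\Phi^{T}\Sigma_\ast\Phi = (\Phi^{T}\Phi)^{2}$; since $\Phi^{T}\Phi$ is positive semidefinite, the principal square root equals $\Phi^{T}\Phi$ itself. Combined with $(\Phi^{T})^+ \Phi^{T} = I_p$, this gives $(\Phi^{T})^+(\Phi^{T}\Sigma_\ast\Phi)^{1/2} = \Phi$, so $T_2(\Phi) = 0$. For $T_1(\Phi)$, observe that $L(\Phi)L(\Phi)^{T}$ is the orthogonal projection onto $\mathrm{range}(\Phi^{T}\Sigma_\ast\Phi)$; since $\Sigma_\ast$ is invertible and $\Phi$ has rank $p$, this range has dimension $p$ and is contained in $\mathrm{range}(\Phi^{T})$, so the two ranges coincide. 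Because $\ker(\Phi) = \mathrm{range}(\Phi^{T})^{\perp}$, we have $\Phi L(\Phi)L(\Phi)^{T} = \Phi$, and hence $A(\Phi) = \Phi\Phi^+ = I_p$, giving $T_1(\Phi) = 0$.

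For (ii) $\Rightarrow$ (i), $T_2(\Phi) = 0$ gives the matrix equation $(\Phi^{T})^+(\Phi^{T}\Sigma_\ast\Phi)^{1/2} = \Phi$. I would eliminate the square root by post-multiplying each side by its own transpose. Using $[(\Phi^{T})^+]^{T} = \Phi^+$ together with the defining identity $A^{1/2}A^{1/2} = A$ for positive semidefinite $A$, the left-hand side becomes
\[
(\Phi^{T})^+ (\Phi^{T}\Sigma_\ast\Phi) \Phi^+ = (\Phi\Phi^{T})^{-1}\Phi\Phi^{T}\Sigma_\ast\Phi\Phi^{T}(\Phi\Phi^{T})^{-1} = \Sigma_\ast,
\]
while the right-hand side is $\Phi\Phi^{T}$. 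Hence $\Sigma_\ast = \Phi\Phi^{T}$, which is exactly condition (i).

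The principal obstacle is the handling of the matrix square root in the direction (ii) $\Rightarrow$ (i): $(\Phi^{T}\Sigma_\ast\Phi)^{1/2}$ has no explicit closed form away from condition (i), and the symmetric $MM^{T}$ device is what makes it disappear cleanly. Once this step is arranged and the full-rank structure of $\Phi$ on $\Theta_\Phi$ is exploited, both implications reduce to short algebraic manipulations with Moore--Penrose inverses.
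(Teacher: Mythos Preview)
Your proof is correct and, in the direction (ii) $\Rightarrow$ (i), is noticeably cleaner than the paper's. The paper proceeds by writing $\Phi = UDV^{T}$ via SVD, rewriting the equation $(\Phi^{T})^+(\Phi^{T}\Sigma_\ast\Phi)^{1/2} = \Phi$ as $\Phi(\Phi^{T}\Sigma_\ast\Phi)^{1/2} = \Phi\Phi^{T}\Phi$, extracting the intermediate identity $V^{T}L\Delta L^{T}V = D^{2}$, then multiplying once more by the square root and passing through $\Sigma_\ast = UD\{V^{T}L\Delta L^{T}V\}D^{-1}U^{T}$ to reach $\Sigma_\ast = \Phi\Phi^{T}$. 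Your device of forming $MM^{T}$ with $M = (\Phi^{T})^+(\Phi^{T}\Sigma_\ast\Phi)^{1/2}$ collapses the square root in one stroke via $(\Phi^{T})^+\Phi^{T} = I_p$ and $\Phi\Phi^+ = I_p$, bypassing the SVD entirely. For (i) $\Rightarrow$ (ii) the two arguments are essentially the same, though you phrase the step $A(\Phi)=I_p$ as a range comparison rather than an explicit SVD computation; in fact your range argument shows the stronger fact that $T_1(\Phi)=0$ for \emph{every} $\Phi\in\Theta_\Phi$ (not just those satisfying (i)), which is exactly the paper's remark that ``only the second term of the loss $\Lcal$ is essential for this identifiability.'' What the paper's SVD route buys is a more explicit bookkeeping of eigenstructure, which connects to other computations in the article; what your route buys is a self-contained two-line argument requiring nothing beyond the full-row-rank pseudoinverse identities.
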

\begin{proof}
The proof is immediately derived from the property of the  Bures-Wasserstein distance.
For example, see \cite{BhatiaEtAl2019}.
\end{proof}
%


The third term of the loss $\Lcal(\Phi)$ involves the square root of the non-degenerate matrix $\Sigma_\ast^{1/2}\Sigma(\Phi)\Sigma_\ast^{1/2}$. The differentiability of the matrix square root is guaranteed for non-degenerate matrices, and can also be extended to certain degenerate cases (e.g., see \cite{Freidlin:1968}).
Thus we can immediately ensure the smoothness of $\left(\Sigma_\ast^{1/2}\Sigma(\Phi)\Sigma_\ast^{1/2}\right)^{1/2}$.
The smoothness of the population-level loss $\Lcal(\Phi)$ is formally stated in the following proposition.
For $\Lambda \in \Rb^{p\times m}$ and $\Psi \in \Theta_\Psi$, 
let $\vecop(\Lambda)$ and $ \diag(\Psi)$ be the vectorization of $\Lambda$ and the diagonal vector of $\Psi$, respectively.
Write $\Theta_\phi =\big\{(\vecop(\Lambda)^\T, \diag(\Psi)^\T)^\T \in \Rb^{p(m+1)} \mid [\Lambda,\Psi] \in \Theta_\Phi\big\}$.

\begin{prop}[Smoothness of the population-level loss]\label{prop:loss-smoothness}
The population-level loss function $\Lcal:\Theta_\phi \rightarrow \Rb$ is smooth on the interior of the compact parameter space $\Theta_\phi$.
\end{prop}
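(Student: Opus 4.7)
My plan is to split the population-level loss as $\Lcal(\Phi) = T_1(\Phi) + T_2(\Phi)$ with
\[
T_1(\Phi) = \tr\!\left[\{I_p - A(\Phi)\}^{T} \Sigma_\ast \{I_p - A(\Phi)\}\right],\qquad
T_2(\Phi) = \left\|(\Phi^{T})^+(\Phi^{T} \Sigma_\ast \Phi)^{1/2} - \Phi\right\|_F^2,
\]
and to verify smoothness of each summand on the interior of $\Theta_\phi$. Since smoothness is preserved under finite sums, matrix products, transposition, the trace, and the squared Frobenius norm, it is enough to check that the matrix-valued building blocks appearing inside depend smoothly on $\phi$. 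A preliminary observation I would record is that on the interior one has $c_L < |\sigma_j| < c_U$ for every $j$, so $\Psi$ is invertible and $\Phi = [\Lambda, \Psi] \in \Rb^{p \times (m+p)}$ has constant (full) row rank $p$ throughout the interior of $\Theta_\phi$.

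The first term $T_1$ is handled quickly: $\Sigma_\ast$ is a fixed constant matrix, so smoothness reduces to that of $\Phi \mapsto A(\Phi) = \Phi L(\Phi) L(\Phi)^{T} \Phi^+$, which is precisely what Lemma~\ref{lemma:smoothness of A(B)} (cited from the appendix) provides. For the second term $T_2$, three ingredients are needed: (i) smoothness of $\Phi \mapsto (\Phi^{T})^+$, (ii) smoothness of $\Phi \mapsto \Phi^{T} \Sigma_\ast \Phi$, and (iii) smoothness of the matrix square root $B \mapsto B^{1/2}$ at $B = \Phi^{T} \Sigma_\ast \Phi$. Part (ii) is immediate since the map is polynomial in the entries of $\Phi$. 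For part (i), I would use the full row rank of $\Phi$ to write $(\Phi^{T})^+ = (\Phi \Phi^{T})^{-1} \Phi$; since $\Phi \Phi^{T}$ is invertible on the interior, this expression is smooth in $\phi$ (this is an instance of the general fact that the Moore-Penrose pseudoinverse is real-analytic on the manifold of matrices of fixed rank).

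The main obstacle will be (iii). Although $\Phi^{T} \Sigma_\ast \Phi$ is positive semidefinite, it is an $(m+p) \times (m+p)$ matrix of rank at most $p$, so it is rank-deficient at every point of the parameter space, and the textbook fact that $B \mapsto B^{1/2}$ is smooth on the positive definite cone cannot be applied directly. This is exactly the degenerate situation addressed by Theorem~2 of \cite{Freidlin:1968} (equivalently Theorem~2.2 of Chapter~III of \cite{Freidlin:1985}), which guarantees that the unique PSD square root of a smoothly parametrized family of PSD matrices inherits the parameter smoothness even across the rank-degenerate locus. Applied to the polynomial family $\phi \mapsto \Phi^{T} \Sigma_\ast \Phi$, this yields (iii). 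Combining (i)--(iii) through matrix multiplication, subtraction, and the squared Frobenius norm then delivers smoothness of $T_2$, completing the argument.
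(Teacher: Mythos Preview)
Your proposal is correct and follows essentially the same approach as the paper: the paper's own proof sketch states that the smoothness ``can be directly obtained from the smoothness of $(\Phi^{T}\Sigma\Phi)^{1/2}$'' (via Freidlin's theorem) ``and the smoothness of $A(\Phi)$'' (via Lemma~\ref{lemma:smoothness of A(B)}), which are exactly your ingredients (iii) and the handling of $T_1$. The only added detail you supply is the explicit treatment of $(\Phi^{T})^+ = (\Phi\Phi^{T})^{-1}\Phi$, which the paper leaves implicit.
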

\begin{proof}
The proof is immediate from Theorem~2 of \cite{Freidlin:1968}.
\end{proof}

\subsection{Consistency}

Now, we will show the strong consistency of matrix decomposition factor analysis.
By the continuity of the squared Bures-Wasserstein distance between positive definite matrices, 
we can easily obtain the strong consistency of matrix decomposition factor analysis.

\begin{theorem}[Consistency of the MDFA estimator]\label{theorem:consistency}
Assume the observation $X_n=(x_1,\dots,x_n)^\T$ is an i.i.d. sample from the factor model (\ref{eq:factor-model}).
Let $\what{\Phi}_n = \Big[\what{\Lambda}_n, \what{\Psi}_n\Big]$ be 
the estimator of the matrix decomposition factor analysis for $\Phi=[\Lambda,\Psi]$.
That is,
$
\what{\Phi}_n \in \mathop{\arg\min}_{\Phi\in \Theta_\Phi} \Lcal_n(\Phi).
$
Then, 
\[
\lim_{n\rightarrow \infty}\Lcal\big(\what{\Phi}_n\big)= 0 \;\text{ a.s.},
\;\text{ and }\;
\lim_{n\rightarrow \infty}\min_{\Phi_\ast \in \Theta_\Phi^\ast}\big\| \what{\Phi}_n- \Phi_\ast \big\|_F = 0\;\text{ a.s.}
\]
\end{theorem}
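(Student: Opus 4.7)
The plan is to follow the standard argmin/argmax continuous mapping strategy for M-estimators, leveraging the three ingredients already assembled: the uniform law of large numbers (Proposition~\ref{prop:USLLN}), the identifiability of the population-level loss (Proposition~\ref{prop:identifiability}), and the smoothness (hence continuity) of $\Lcal$ on the compact parameter space $\Theta_\Phi$ (Proposition~\ref{prop:loss-smoothness}). The two conclusions of the theorem are handled in turn: first I establish that the loss value $\Lcal(\what{\Phi}_n)$ vanishes in the limit, then I bootstrap this into a set-valued convergence statement via compactness.

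For the first claim, fix any $\Phi_\ast \in \Theta_\Phi^\ast$. By definition of $\what{\Phi}_n$ as a minimizer of $\Lcal_n$, we have $\Lcal_n(\what{\Phi}_n) \le \Lcal_n(\Phi_\ast)$. Applying Proposition~\ref{prop:USLLN} twice (the uniform bound on the left, pointwise convergence on the right) and using Proposition~\ref{prop:identifiability} to assert $\Lcal(\Phi_\ast) = 0$, I obtain, almost surely,
\[
0 \le \Lcal(\what{\Phi}_n) \le \bigl|\Lcal(\what{\Phi}_n) - \Lcal_n(\what{\Phi}_n)\bigr| + \Lcal_n(\Phi_\ast) \le \sup_{\Phi \in \Theta_\Phi}\bigl|\Lcal_n(\Phi) - \Lcal(\Phi)\bigr| + \Lcal_n(\Phi_\ast) \longrightarrow 0.
\]
Non-negativity of $\Lcal$ follows because $\Lcal$ is the limit of the non-negative $\Lcal_n$ (one could also read it off the explicit form as a sum of two squared-type terms).

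For the second claim, I use a compactness-plus-contradiction argument. The parameter space $\Theta_\Phi$ is compact, and by continuity of $\Lcal$ its zero set equals $\Theta_\Phi^\ast$ (Proposition~\ref{prop:identifiability}), which is therefore compact as well. Let $d(\Phi) := \min_{\Phi_\ast \in \Theta_\Phi^\ast} \|\Phi - \Phi_\ast\|_F$; this is a continuous function on $\Theta_\Phi$. Suppose, on a set of positive probability, that $d(\what{\Phi}_n) \not\to 0$. Then along some subsequence $\{n_k\}$ there is $\delta > 0$ with $d(\what{\Phi}_{n_k}) \ge \delta$ for all $k$. By compactness of $\Theta_\Phi$, extract a further subsequence $\what{\Phi}_{n_{k_j}} \to \Phi^\circ$ for some $\Phi^\circ \in \Theta_\Phi$ with $d(\Phi^\circ) \ge \delta$; in particular $\Phi^\circ \notin \Theta_\Phi^\ast$, so $\Lcal(\Phi^\circ) > 0$. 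On the other hand, continuity of $\Lcal$ and the first part give $\Lcal(\Phi^\circ) = \lim_j \Lcal(\what{\Phi}_{n_{k_j}}) = 0$, a contradiction.

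I expect the routine part to be the M-estimator machinery above; the delicate point — already absorbed into the preceding propositions — is that the ``parameter'' $\Phi$ is only identified up to the indeterminacy set $\Theta_\Phi^\ast$, which is why the conclusion must be phrased as convergence to the set rather than to a single point, and why the non-uniqueness of the right singular vectors in Lemma~\ref{lemma:marginal-loss} does not cause trouble. No additional estimates are required beyond those already established.
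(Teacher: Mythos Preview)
Your proof is correct and follows essentially the same strategy as the paper: the first claim is handled identically via the uniform law of large numbers and $\Lcal(\Phi_\ast)=0$, and the second claim is the standard compactness-plus-identifiability step. The only cosmetic difference is that the paper argues the second part via the well-separated-minimum inequality $\inf_{d(\Phi,\Theta_\Phi^\ast)\ge\epsilon}\Lcal(\Phi)>0$ and then compares empirical infima, whereas you extract a convergent subsequence directly; both are textbook variants of the same argument.
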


From this theorem, the MDFA estimator converges to the true parameter, similar to other consistent estimators in factor analysis; 
thus, the MDFA estimator is appropriate for factor analysis.
This theorem explains why matrix decomposition factor analysis provides results 
similar to those of other consistent estimators in factor analysis.

Under Anderson and Rubin's sufficient condition,
a stronger consistency result can be achieved when combined with Theorem~1 of \cite{Kano:1983}.
More precisely, under Anderson and Rubin's sufficient condition, 
the factor decomposition $\Sigma_\ast = \Lambda_\ast\Lambda_\ast^{\T} + \Psi^2$ is unique, 
leading to the following result:
\[
 \lim_{n\rightarrow \infty}\min_{P\in \Ocal(m)}\big\| \what{\Lambda}_nP - \Lambda_\ast\big\|_F^2 = 0 \text{ and }
 \lim_{n\rightarrow \infty}\big\| \what{\Psi}_n - \Psi_\ast\big\|_F^2 = 0 \;\text{ a.s.}
\]

Moreover, even if the factor model (\ref{eq:factor-model}) is incorrect, 
the MDFA estimator remains consistent as a minimum contrast estimator. 
That is, the MDFA estimator converges to a solution that minimizes the population-level loss $\mathcal{L}$.

\subsection{Asymptotic normality}

Based on the unified approach of \cite{Shapiro:1983}, 
we can show the asymptotic normality of the MDFA estimator.
To eliminate the rotational indeterminacy,
we consider the following identifiability condition introduced by \cite{Anderson:1956}:
\banum
\Lambda = 
\begin{bmatrix}
\lambda_{11} & 0 & 0 & \cdots & 0\\
\lambda_{21} & \lambda_{22} & 0 & \cdots & 0\\
\lambda_{31} & \lambda_{32} & \lambda_{33} & \cdots & 0\\
\vdots & \vdots & \vdots & \ddots & \vdots\\
\lambda_{m1} & \lambda_{m2} &  \lambda_{m3} & \cdots &  \lambda_{mm}\\
\vdots & \vdots & \vdots & \ddots & \vdots\\
\lambda_{p1} & \lambda_{p2} &  \lambda_{p3} & \cdots &  \lambda_{pm}
\end{bmatrix}.
\label{eq:IC5}
\eanum
When we impose this condition on the true loading matrix with $\lambda_{jj}> 0\;(j=1,\dots,m)$,
we can avoid both rotational and sign indeterminacy.
This condition can be easily handled in matrix decomposition factor analysis.
In fact, from the representation (\ref{eq:for-Phi}), 
we simply set the upper triangle part of $\what{\Lambda}_{(t)}$ to zero in Step~3 of Algorithm~\ref{algo:MDFA}.
This adjustment allows us to obtain the MDFA estimator that minimizes the loss function $\Lcal_n$ under the condition (\ref{eq:IC5}).

Moreover, for the identifiability of the factor decomposition, 
we also assume Anderson and Rubin's sufficient condition for the true loading matrix $\Lambda_\ast$. 
That is, we assume that if any row of $\Lambda_\ast$ is deleted, there remain two disjoint submatrices of rank $m$.

Let $\vech(\cdot)$ be the vech operator (see \cite{MagnusNeudecker:2019}).
For $\Phi=[\Lambda,\Psi]$ with the condition (\ref{eq:IC5}), 
let $\theta = (\lambda_{11},\dots,\lambda_{p1},\lambda_{22},\dots,\lambda_{p2},\dots,\lambda_{mm},\dots,\lambda_{pm},
\sigma_1^2,\dots,\sigma_p^2)^\T$ be the parameter vector.
Let $\Theta = \{\theta_\Lambda \in \Rb^{pm - m(m-1)/2} \mid |\theta_{\Lambda,j}| < c_\Lambda\} \times [c_L^2,c_U^2]^p$ be the compact parameter space for the parameter $\theta$.
To derive the asymptotic normality of the MDFA estimator,
we redefine the loss functions $\Lcal_n$ and $\Lcal$ as a unified function of the parameter $\theta$ and the covariance matrix $\Sigma$.
For any $\theta \in \Theta$, define
\ba
\Lcal(\theta,\Sigma) := \Lcal(\Phi,\Sigma) = d_{\BW}^2(\Sigma(\theta),\Sigma) = d_{\BW}^2(\Sigma, \Sigma(\theta)),
\ea
where $\Phi$ is the matrix representation of $\theta$, and $\Sigma(\theta)$ is the modeled covariance matrix with $\theta$.

\begin{theorem}[Asymptotic normality of the MDFA estimator]\label{theorem:asymptotic-normality}
Assume that the observation $X_n=(x_1,\dots,x_n)^\T$ is an i.i.d. sample from the factor model (\ref{eq:factor-model}) with
the true parameter $\Phi_\ast = [\Lambda_\ast,\Psi_\ast]$.
It is assumed that the fourth moment of $x_1$ is bounded.
For the identifiability, suppose that the true loading matrix $\Lambda_\ast$ satisfies Anderson and Rubin's sufficient condition
and the condition~(\ref{eq:IC5}) with $\lambda_{jj}^\ast >0\;(j=1,\dots,m)$.
Let $\theta_\ast$ be the true parameter vector corresponding to $\Phi_\ast$.
Moreover, suppose that the true parameter lies in the interior of $\Theta$, and the Hessian matrix
\[
H_{\theta\theta}(\theta_\ast, \Sigma_\ast)=\frac{\partial^2\Lcal(\theta_\ast, \Sigma_\ast) }{\partial \theta \partial \theta^\T} 
\]
is nonsingular at the point $(\theta_\ast,\Sigma_\ast)$.

Let $\what{\theta}_n$ be the MDFA estimator with the condition~(\ref{eq:IC5}) for the true parameter $\theta_\ast$.
Here, the elements $\what{\lambda}_{jj}\;(j=1,\dots,m)$ of the estimator $\what{\theta}_n$ are set to be nonnegative.
Then, we obtain the asymptotic normality of the MDFA estimator $\what{\theta}_n$:
$
\sqrt{n}\big(\what{\theta}_n - \theta_\ast \big) \rightarrow N(0,V)
$
in distribution as $n\rightarrow \infty$, where the matrix $\Gamma$ is the asymptotic variance of the sample covariance, 
$V = J\Gamma J^\T$, and
\[
J =-H_{\theta\theta}^{-1}(\theta_\ast, \Sigma_\ast) \frac{\partial^2\Lcal(\theta_\ast, \Sigma_\ast)}{\partial \theta \partial \vech(\Sigma)^\T}.
\]
\end{theorem}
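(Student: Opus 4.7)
The plan is to invoke Shapiro's (1983) unified framework for minimum discrepancy function estimators, since Lemma~\ref{lemma:marginal-loss} reveals that $\Lcal_n(\Phi) = \Lcal(\Phi, \what{S}_n)$ is a smooth function of the sample covariance matrix alone. In other words, the MDFA estimator falls within the class of covariance-matching estimators, for which a delta-method argument applies once the appropriate regularity conditions have been checked.

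I would proceed in four steps. First, identifiability of the true parameter. Proposition~\ref{prop:identifiability} gives $\Lcal(\theta, \Sigma_\ast) = 0$ iff $\Phi\Phi^{T} = \Sigma_\ast$; combined with Theorem~1 of Kano (1983) under Anderson--Rubin's sufficient condition, the decomposition $\Sigma_\ast = \Lambda_\ast\Lambda_\ast^{T} + \Psi_\ast^2$ is unique up to column sign and orthogonal rotation, and the triangularity constraint~(\ref{eq:IC5}) with $\lambda_{jj}^\ast > 0$ removes both indeterminacies. This upgrades Theorem~\ref{theorem:consistency} to $\what{\theta}_n \to \theta_\ast$ almost surely. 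Since $\theta_\ast$ lies in the interior of $\Theta$, with probability tending to one $\what{\theta}_n$ is also interior and satisfies the first-order condition $\partial \Lcal(\what{\theta}_n, \what{S}_n)/\partial \theta = 0$.

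Second, joint smoothness. Proposition~\ref{prop:loss-smoothness} supplies smoothness of $\Lcal(\cdot, \Sigma_\ast)$ in $\theta$, but the delta-method requires joint smoothness of $(\theta, \Sigma) \mapsto \Lcal(\theta, \Sigma)$ on an open neighborhood of $(\theta_\ast, \Sigma_\ast)$. The two ingredients $A(\Phi, \Sigma)$ and $(\Phi^{T} \Sigma \Phi)^{1/2}$ are jointly smooth in $(\Phi, \Sigma)$ near $(\Phi_\ast, \Sigma_\ast)$ by the same reasoning used for Proposition~\ref{prop:loss-smoothness}, namely the smoothness lemma for $A$ combined with Freidlin's theorem applied to the symmetric matrix $\Phi^{T} \Sigma \Phi$ with $\Sigma$ varying. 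The third step is the Taylor expansion of the first-order condition around $(\theta_\ast, \Sigma_\ast)$. Since $\theta_\ast$ is an interior minimizer of $\Lcal(\cdot, \Sigma_\ast)$, we have $\partial \Lcal(\theta_\ast, \Sigma_\ast)/\partial \theta = 0$, so
\[
0 = H_{\theta\theta}(\theta_\ast, \Sigma_\ast)(\what{\theta}_n - \theta_\ast) + \frac{\partial^2 \Lcal(\theta_\ast, \Sigma_\ast)}{\partial \theta \,\partial \vech(\Sigma)^{T}} \vech(\what{S}_n - \Sigma_\ast) + r_n,
\]
with $r_n = o_P(\|\what{\theta}_n - \theta_\ast\| + \|\what{S}_n - \Sigma_\ast\|_F)$. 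Inverting the (nonsingular) Hessian yields $\sqrt{n}(\what{\theta}_n - \theta_\ast) = J\,\sqrt{n}\,\vech(\what{S}_n - \Sigma_\ast) + o_P(1)$. The fourth step is the standard CLT for the sample covariance: the bounded fourth-moment assumption gives $\sqrt{n}\,\vech(\what{S}_n - \Sigma_\ast) \to N(0, \Gamma)$ in distribution, and Slutsky's lemma concludes $\sqrt{n}(\what{\theta}_n - \theta_\ast) \to N(0, J\Gamma J^{T})$.

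The main obstacle I anticipate is the joint-smoothness step, since $\Phi^{T} \Sigma \Phi$ has rank $p$ but lies on the boundary of the positive-definite cone inside the $(m+p)\times(m+p)$ symmetric matrices, so the matrix square root is not obviously differentiable by routine matrix calculus. Proposition~\ref{prop:loss-smoothness} already handles this in the $\theta$-direction via Freidlin's theorem, and the same device should extend to joint variation in $\Sigma$; but care is required to verify that the smooth branch of the square root that Freidlin's theorem delivers depends smoothly on all arguments jointly, and that the resulting derivative formulas are consistent with the standard Shapiro formulas $J = -H_{\theta\theta}^{-1} \partial^2 \Lcal / (\partial \theta \,\partial \vech(\Sigma)^{T})$.
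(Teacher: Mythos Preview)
Your proposal is correct and follows essentially the same route as the paper: invoke Shapiro's (1983) framework, establish joint smoothness of $\Lcal(\theta,\Sigma)$ via Freidlin's integral representation (the paper's Lemma~\ref{lemma:integral-form}), and combine a first-order expansion with the CLT for $\what{S}_n$ and Slutsky. The only cosmetic difference is that the paper phrases the linearization via the implicit function theorem applied to the minimizer map $\vech(\Sigma)\mapsto \what\theta(\vech(\Sigma))$, whereas you Taylor-expand the score equation directly; both yield the same Jacobian $J$ and the same conclusion.
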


\section{Numerical experiments}
\begin{figure}[t]
  \begin{minipage}[b]{0.49\hsize}
    \centering
    \includegraphics[width = 0.98\textwidth]{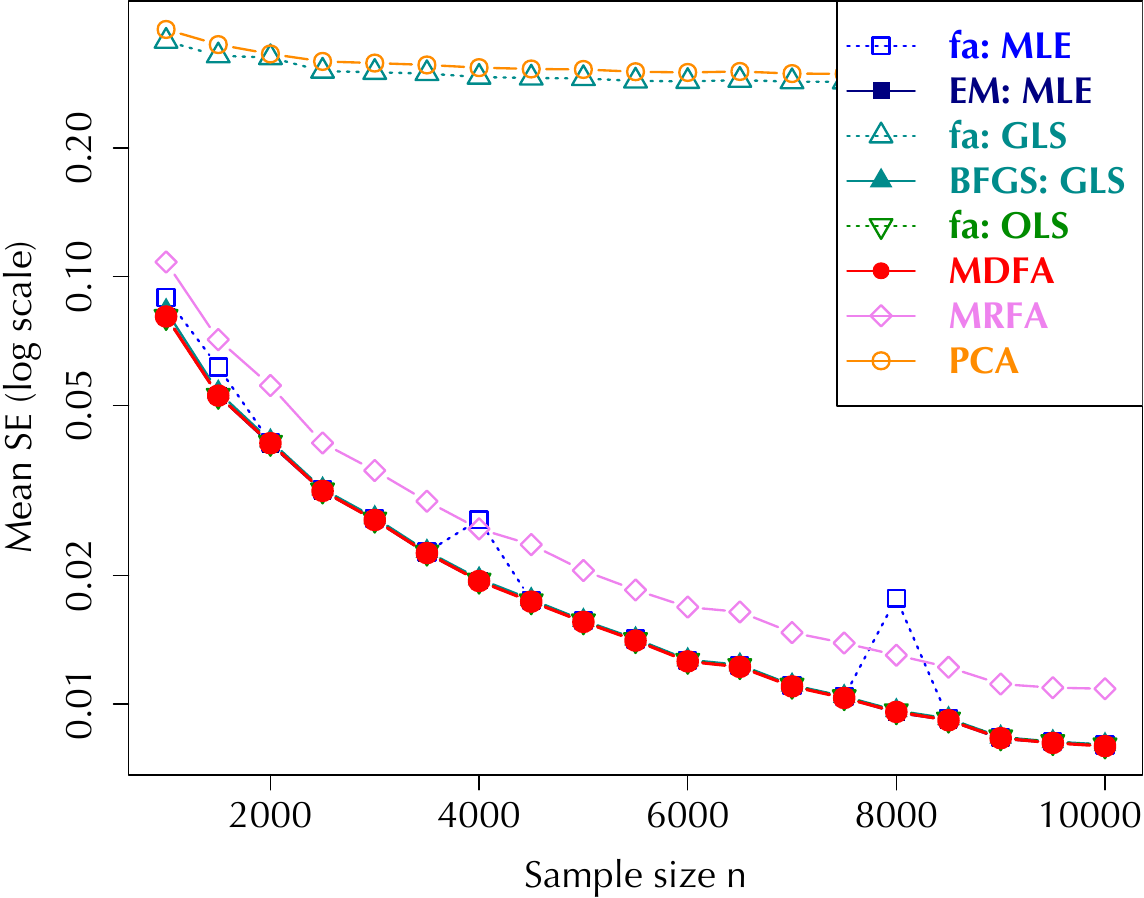}
    \subcaption{$\mathrm{SE}_{\Lambda}(\what{\Lambda}_n)$}\label{fig:setting1-lam}
  \end{minipage}
  \begin{minipage}[b]{0.49\hsize}
    \centering
    \includegraphics[width = 0.98\textwidth]{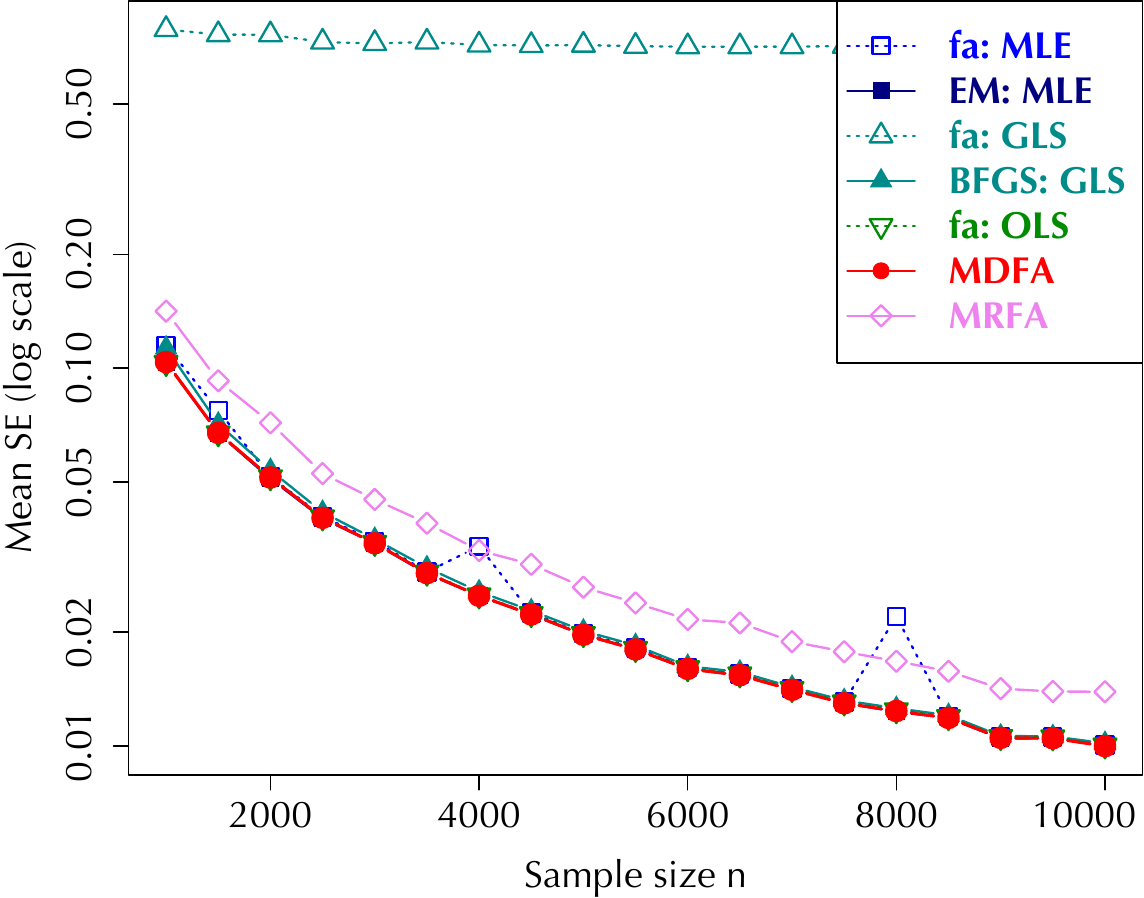}
    \subcaption{$\mathrm{SE}(\what{\Lambda}_n, \what{\Psi}_n^2)$}\label{fig:setting1-all}
  \end{minipage}
  \caption{Convergence behavior of all estimators in Setting~1 for larger sample sizes.}\label{fig:setting1}
\end{figure}

We will demonstrate the performance of the MDFA estimator through numerical experiments. 
Throughout the experiments, the MDFA estimator with the factor $1/(n-1)$ instead of $1/n$ is employed (see Remark~\ref{remark:unbias} in Section~\ref{sec:idea}).
We compare the MDFA estimator with existing estimators implemented in the R package \texttt{psych} (\citealp{psych}), 
specifically, the maximum likelihood estimator (fa: MLE), the ordinary least-squares estimator (fa: OLS), the generalized least-squares estimator (fa: GLS), and the minimum rank estimator (fa: MRFA).
In addition to these estimators, 
we also consider the maximum likelihood estimator by the expectation-maximization algorithm (EM: MLE) and 
the generalized least-squares estimator using numerical derivatives (BFGS: GLS)
since the maximum likelihood estimator and the generalized least-squares estimator of \texttt{psych} are sometimes unstable.
We note that the maximum likelihood estimator by the expectation-maximization algorithm never gives improper solutions (\citealp{Adachi:2013}).
Moreover, the principal component estimator (PCA) is also considered, although it is inconsistent as an estimator for the loading matrix in factor analysis.

We consider the following four settings:
\begin{itemize}
\item {\bf Setting 1}: 
Set the number of variables $p = 20$ and the number of factors $m = 5$. 
The true loading matrix $\Lambda_\ast$ has a perfect simple structure.
That is, each row of the loading matrix has at most one non-zero element, and each column has four non-zero elements.
The non-zero elements of the loadings were independently drawn from uniform distributions on the interval $[0.90,0.95]$ for Factor 1, on $[0.85,0.90]$ for Factor 2, $[0.80,0.85]$ for Factor 3, 
$[0.45,0.50]$ for Factor 4, and $[0.40,0.45]$ for Factor 5.
Thus, the fourth and fifth factors are weak.
 For each variable, the unique variance was set to one minus the common variance.
 That is, the true covariance is given by
$\Sigma_\ast = \Lambda_\ast\Lambda_\ast^{\T} + \Psi_\ast^2$ and 
$\Psi_\ast^2 = I_p - \diag(\Lambda_\ast\Lambda_\ast^{\T})$.
 All $p$-dimensional observations are independently generated from the centered multivariate normal distribution with the true covariance $\Sigma_\ast$.
 
\item {\bf Setting 2}: 
Set $p = 20$ and $m = 5$.
The generating process of the true loading matrix is the same as in Setting 1.
In this setting, we consider the following approximate factor model:
\[
\Sigma_\ast = \Lambda_\ast\Lambda_\ast^{\T} + \Psi_\ast^2 + WW^{\T},
\]
where $W$ is a $p\times 150$-minor factor loading matrix, and $WW^{\T}$ represents the model error term.
Figure~\ref{fig:setting-2} shows an example of the true covariance matrix $\Sigma_\ast$.
Minor factor loadings in $W$ were generated using the well-known procedure of \cite{TuckerEtAl:1969}. 
This procedure has two parameters: the proportion $\pi$ of unique variance due to model error and the parameter $\varepsilon$ introduced by \cite{TuckerEtAl:1969}. 
Here, we use the setting of \cite{BriggsMacCallum03} for generating $W$. 
The proportion of unique variance was set to $\pi= 0.2$. 
The parameter $\varepsilon$ controls the size of the factor loadings in successive minor factors, and we set $\varepsilon$ to $0.1$.
The unique variances were set such that the diagonal elements of $\Sigma_\ast$ equal $1.0$.
In this experiment, we use the function \texttt{simFA} in the R package \texttt{fungible} (\citealp{fungible}) for generating $W$.

\item {\bf Setting 3}:  Set $p = 50$ and $m = 5$, with each factor loading having ten non-zero elements. 
All other aspects are identical to Setting 1.

\item {\bf Setting 4}:  Set $p = 50$ and $m = 5$, with each factor loading having ten non-zero elements. 
All other aspects are identical to Setting 2.
\end{itemize}

\begin{figure}
    \centering
    \includegraphics[width=0.98\textwidth]{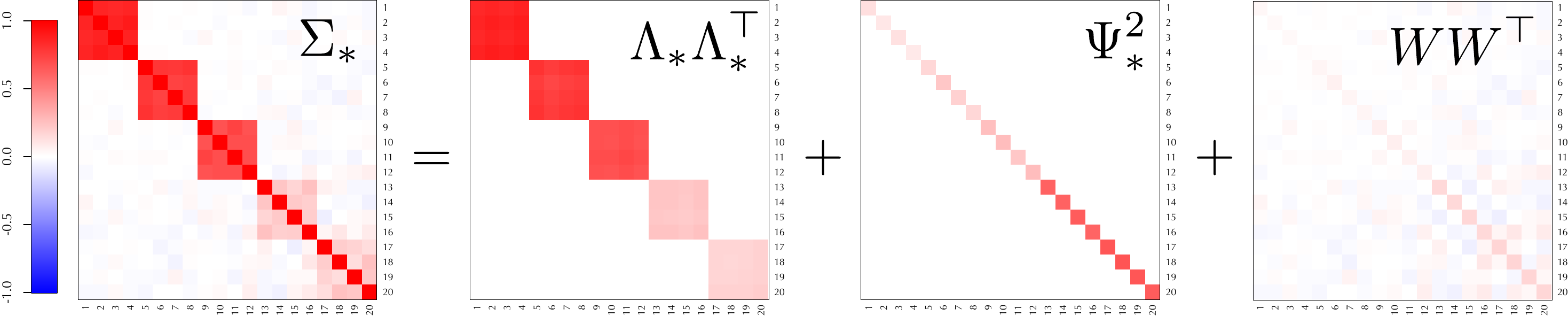}
  \caption{An example of the true covariance $\Sigma_\ast=\Lambda_\ast\Lambda_\ast^{\T} + \Psi_\ast^2 + WW^{\T}$ in Setting~2.}\label{fig:setting-2}
\end{figure}

\begin{remark}
Settings 2 and 4 are inspired by the numerical experiments of \cite{BriggsMacCallum03}.
In these settings, the ordinary least-squares estimator is known to perform better than the maximum likelihood estimator.
Specifically, the maximum likelihood estimator often fails to recover the weak factors (i.e., the fourth and fifth factors), whereas the ordinary least-squares estimator succeeds.
\end{remark}

In each setting, ten different sample sizes $n = 100, 200, \dots,1000$ are considered, 
and 100 replications were generated at each sample size.
To demonstrate the consistency of the MDFA estimator more clearly, 
we also consider larger sample sizes $n = 1000,2000,\dots,10000$ for Setting~1.
To evaluate the estimation accuracy, we employ the following squared errors: 
\[
\mathrm{SE}_{\Lambda}(\what{\Lambda}_n) = \min_{P: P^{\T}P = I_m}\big\| \what{\Lambda}_nP - \Lambda_\ast\big\|_F^2,
\;\text{ and }\;
\mathrm{SE}(\what{\Lambda}_n, \what{\Psi}_n^2) = \mathrm{SE}_{\Lambda}(\what{\Lambda}_n)  + \big\|\what{\Psi}_n^2 - \Psi_\ast^2\big\|_F^2,
\]
where $\what{\Lambda}_n$ and $\what{\Psi}_n^2$ are estimators for $\what{\Lambda}_n$ and $\Psi_\ast^2$, respectively.
For Settings~2 and 4, the true unique variance $\Psi_\ast^2$ is replaced by $I_p-\diag(\Lambda_\ast\Lambda_\ast^{\T})$ 
when we evaluate the estimation accuracy $\mathrm{SE}(\what{\Lambda}_n,\what{\Psi}_n^2)$ for unique variance.
For the principal component estimator, we only evaluate the squared error $\mathrm{SE}_{\Lambda}(\what{\Lambda}_n)$ for factor loadings.
All reported results are the average of 100 replications. 

Figure~\ref{fig:setting1} shows the convergence behavior of all estimators in Setting~1 with larger sample sizes.
The MDFA estimator converges to the true parameter as the other consistent estimators.
In contrast, the principal component estimator is inconsistent in the finite-dimensional settings, 
and thus, it converges to a different solution.
Although the generalized least-squares estimator using numerical derivatives performs well, 
the generalized least-squares estimator by \texttt{psych} does not work.
This difference is related to the choice of optimization methods.

\begin{figure}
    \centering
    \includegraphics[width=0.95\textwidth]{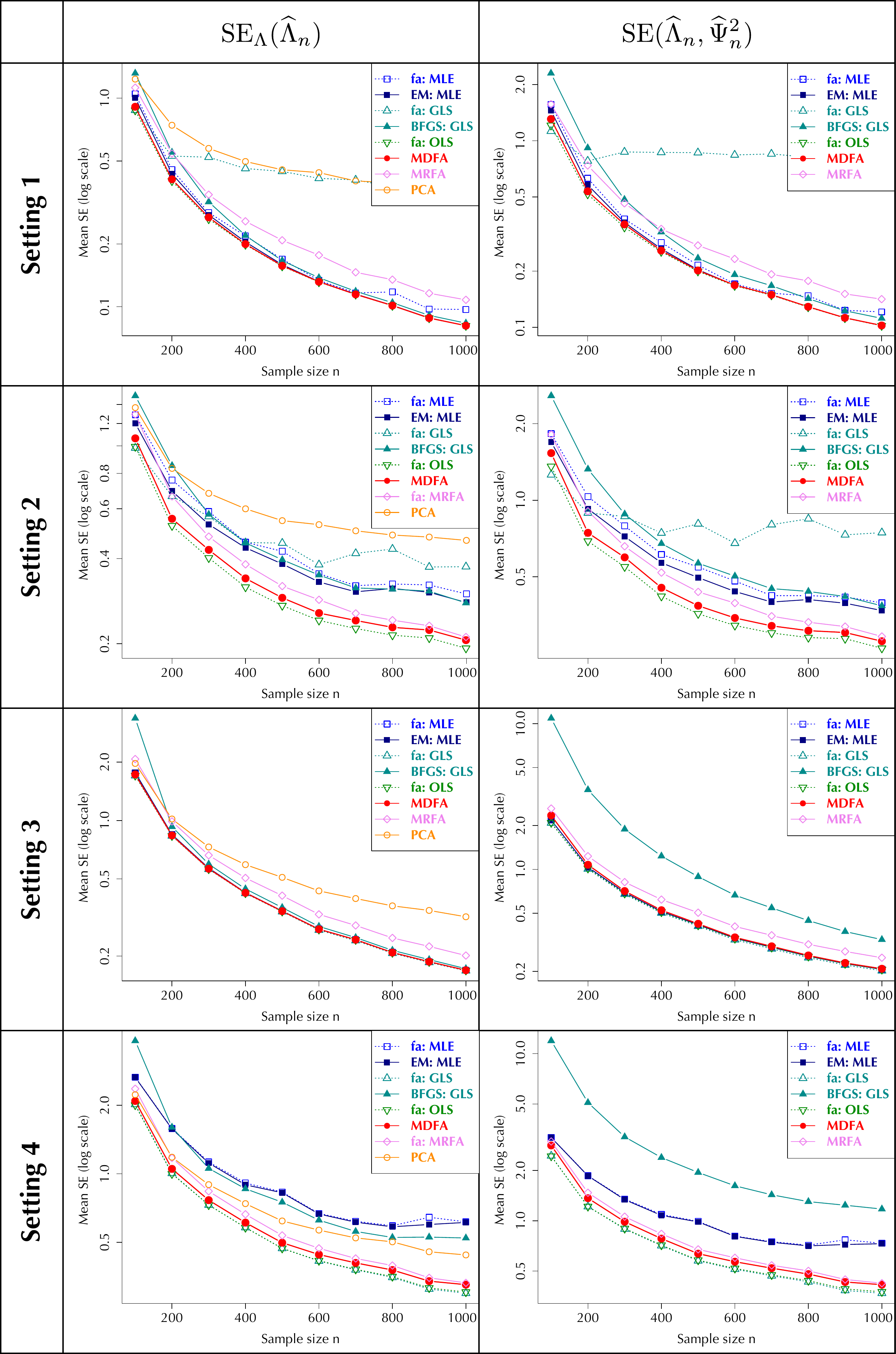}
  \caption{The averages of squared errors of $100$ replications for all settings.}\label{fig:all}
\end{figure}

Figure~\ref{fig:all} shows the result of all settings with sizes $n = 100,\dots, 1000$.
In Settings 2 and 4, the maximum likelihood estimator fails to recover the weak factor loadings (Factors 4 and 5), 
although it performs well in the settings without model error.
In contrast, the minimum rank estimator works well when model error is present
but yields slightly poorer results than other consistent estimators in settings without model error.
Due to the complexity of the optimization in minimum rank factor analysis, 
its computational cost is higher than that of other methods.
Unexpectedly, the generalized least-squares estimators are unstable.
In higher-dimensional settings, the generalized least-squares estimator by \texttt{psych} performs well,
although it behaves irregularly in the lower-dimensional settings.
The ordinary least-squares estimator provides better performance across all settings, 
as shown in \cite{MacCallumTucker91}, \cite{BriggsMacCallum03}, and \cite{MacCallumEtAl07}.
Remarkably, the MDFA estimator also performs consistently well in all settings, 
comparable to the ordinary least-squares estimator.

\section{Conclusion}

In this paper, we established the statistical properties of matrix decomposition factor analysis for the first time. 
We proved that the MDFA estimator is consistent as a factor analysis method, rather than a variant of principal component analysis (PCA).
Furthermore, we established the asymptotic normality of the MDFA estimator based on the unified theory of \cite{Shapiro:1983}.
A key finding is that, by recognizing the MDFA estimator as a semiparametric maximum likelihood estimator,
the profile likelihood is found to be the squared Bures-Wasserstein distance between the sample covariance matrix and the modeled covariance matrix.
Beyond MDFA, the squared Bures-Wasserstein representation holds for a broad class of component analysis methods, including PCA, 
thereby offering a unified perspective on component analysis.

Numerical experiments demonstrated that the MDFA estimator performs stably and effectively across various settings.
These results indicate that the MDFA estimator is a promising choice for factor analysis, 
and that it can potentially be extended to structural equation modeling (SEM).
However, because oblique models are essential in SEM, extending MDFA to SEM becomes computationally challenging.

As described in Section~\ref{sec:MDFA}, 
the sparse MDFA estimator with the $\ell_0$-constraint can be easily obtained.
An important direction for future research is to investigate the asymptotic properties of the sparse MDFA estimator in high-dimensional settings.


\begin{appendix}
\section{Proofs}
\subsection{Proof of Proposition~\ref{lemma:marginal-loss}}

\begin{proof}[Proof of Proposition~\ref{lemma:marginal-loss}]
From Equation~(7) in the main paper, we have
\ba
\Lcal_n(\Phi) = \Lcal_n\big(\Phi, \what{Z}(\Phi)\big) 
=
\frac{1}{n}\|X_n\|_F^2 + \|\Phi\|_F^2 - \frac{2}{n}\tr\{(X_n\Phi)^{\T} \what{Z}(\Phi)\}.
\ea
By the definition of $\what{Z}(\Phi)$, 
\ba
(X_n\Phi)^{\T} \what{Z}(\Phi)
&=
\what{L}(\Phi)\what{\Delta}(\Phi)\what{K}(\Phi)^\top\left( \what{K}(\Phi)\what{L}(\Phi)^\top + \what{K}_\perp(\Phi)\what{L}_\perp(\Phi)^\top \right)\\
&=
\what{L}(\Phi)\what{\Delta}(\Phi)\what{L}(\Phi)^\top
=
\left(\Phi^\top \what{S}_n \Phi \right)^{1/2}.
\ea
Thus, we obtain the following representation:
\ba
\Lcal_n(\Phi) =
\tr\left(\what{S}_n\right) + \tr(\Phi\Phi^\top)  - 2 \tr\left\{ \left(\Phi^\top \what{S}_n \Phi \right)^{1/2}\right\}.
\ea
By the property of the eigenvalues, 
the eigenvalues of $\Phi^\top \what{S}_n \Phi$ are identical to $\what{S}_n^{1/2} \Phi\Phi^\top \what{S}_n^{1/2}$.
From this, we finally obtain 
\ba
\Lcal_n(\Phi) =
\tr\left(\what{S}_n\right) + \tr\left\{ \Sigma(\Phi)\right\}  - 2 \tr\left\{ \left(\what{S}_n^{1/2} \Sigma(\Phi) \what{S}_n^{1/2} \right)^{1/2} \right\}.
\ea
\end{proof}

\subsection{Proof of the main theorems}

\begin{proof}[Proof of Theorem~3.5]
By the triangle inequality of $d_{\BW}$, we have
\[
\Lcal_n(\Phi) = d_{\BW}^2\big(\what{S}_n,\Sigma(\Phi)\big)
\le \left\{ d_{\BW}\big(\what{S}_n,\Sigma_\ast \big) + d_{\BW}\left(\Sigma_\ast,\Sigma(\Phi)\right) \right\}^2
= \left\{ d_{\BW}\big(\what{S}_n,\Sigma_\ast \big) +  \Lcal^{1/2}(\Phi)  \right\}^2.
\]
The smoothness of $d_{\BW}$ implies the first claim immediately:
\ba
\Lcal\big(\what{\Phi}_n\big)   -  \Lcal(\Phi_\ast)
&\le
\Lcal\big(\what{\Phi}_n\big)  - \Lcal_n\big(\what{\Phi}_n\big) + \Lcal_n\big(\what{\Phi}_n\big)  -  \Lcal(\Phi_\ast)\\
&\le
\Lcal\big(\what{\Phi}_n\big)  - \Lcal_n\big(\what{\Phi}_n\big) + \Lcal_n(\Phi_\ast) -  \Lcal(\Phi_\ast)\\
&\le 
2\sup_{\Phi \in \Theta_\Phi}\left|\Lcal_n(\Phi) -  \Lcal(\Phi) \right| \rightarrow 0 \quad\text{a.s.} 
\ea

Next, we will show the second claim.
Let $d(\Phi,\Theta_\Phi^\ast) = \min_{\Phi_\ast \in \Theta_\Phi^\ast}\big\| \Phi- \Phi_\ast \big\|_F$.
The continuity of $\Lcal$ implies
\banum
\forall \epsilon >0;\; \inf_{d(\Phi,\Theta_\Phi^\ast)\ge \epsilon} \Lcal(\Phi) > 0. \label{eq:identifiability}
\eanum
By the above uniform law of large numbers, we have
\ba
\mathop{\lim\inf}_{n\rightarrow \infty}\inf_{d(\Phi,\Theta_\Phi^\ast)\ge \epsilon} \Lcal_n(\Phi)
\ge
\inf_{d(\Phi,\Theta_\Phi^\ast)\ge \epsilon} \Lcal(\Phi) \quad\text{a.s.}
\ea
Let $\tilde{\Omega}$ be the event satisfying the above uniform convergence.
For any $\epsilon>0$ and $\omega \in \tilde{\Omega}$, 
the inequality (\ref{eq:identifiability}) and the first claim lead to
\ba
\mathop{\lim\inf}_{n\rightarrow \infty}\inf_{d(\Phi,\Theta_\Phi^\ast)\ge \epsilon} \Lcal_n(\Phi)
\ge
\inf_{d(\Phi,\Theta_\Phi^\ast)\ge \epsilon} \Lcal(\Phi)
>
\Lcal(\Phi^\ast) = \mathop{\lim\sup}_{n\rightarrow \infty} \Lcal_n\big(\what{\Phi}_n\big), 
\ea
and thus there exists $n_0 \in \Nb$ such that for all $n\ge n_0$
\ba
\inf_{d(\Phi,\Theta_\Phi^\ast)\ge \epsilon} \Lcal_n(\Phi) >  \Lcal_n\big(\what{\Phi}_n\big).
\ea
This implies $d\big(\what{\Phi}_n,\Theta_\Phi^\ast\big)\le \epsilon$, and the proof is complete.
\end{proof}

\begin{proof}[Proof of Theorem~\ref{theorem:asymptotic-normality}]
We employ the unified approach of \cite{Shapiro:1983} to show the asymptotic normality of the MDFA estimator.
By Lemma~\ref{lemma:integral-form}, we can ensure the smoothness of $\Lcal(\theta,\Sigma)$ on $\Theta^\circ \times \Scal_{++}^p$,
where $\Theta^\circ$ is the interior of $\Theta$, and $\Scal_{++}^p$ is the set of all $p\times p$ positive definite matices.
By the assumptions and Theorem~4.1 of \cite{Shapiro:1983}, 
for any $\Sigma$ in a sufficiently small neighbourhood of $\Sigma_\ast$,
the minimizer of $\Lcal(\theta,\Sigma)$ is unique.
Thus, for $\Sigma$ in the neighbourhood of $\Sigma_\ast$, let $\what{\theta}(\vech(\Sigma))$ be the minimizer of $\Lcal(\theta,\Sigma)$.
By the optimality of $\what{\theta}(\vech(\Sigma))$ and the implicit function theorem,
the MDFA estimator $\what{\theta}(\vech(\Sigma))$  is continuously differentiable at $\Sigma = \Sigma_\ast$, and
the Jacobian matrix of $\what{\theta}(\vech(\Sigma))$ at $\Sigma = \Sigma_\ast$ is
\[
J=\frac{\partial \what{\theta}(\vech(\Sigma)) }{\partial \vech(\Sigma)} 
= -H_{\theta\theta}^{-1}(\theta_0,\Sigma_\ast)\frac{\partial^2\Lcal(\theta_\ast, \Sigma_\ast)}{\partial \theta \partial \vech(\Sigma)^{T}}.
\]
Thus, we have
\ba
&\what{\theta}(\vech(\Sigma)) -  \theta_\ast
= \what{\theta}(\vech(\Sigma)) - \what{\theta}(\vech(\Sigma_\ast))\\
&= \{\vech(\Sigma) -  \vech(\Sigma_0)\}^{T}
\int_{0}^1 \frac{\partial}{\partial \vech(\Sigma)} \what{\theta}\big(\vech(\Sigma_\ast) + t\{\vech(\Sigma) - \vech(\Sigma_\ast)\} \big)\,dt.
\ea
By the asymptotic normality of the sample covariance and Slutsky's theorem,
we can conclude that
\ba
\sqrt{n} \left( \what{\theta}_n -  \theta_\ast \right)
&=
\sqrt{n} \left\{ \what{\theta}(\what{S}_n) -  \theta_\ast \right\}
=
\frac{\partial \what{\theta}(\vech(\Sigma))}{\partial \vech(\Sigma)^{T}} \sqrt{n} \{\vech(\Sigma) -  \vech(\Sigma_0)\}
+o_P(1)\\
&\rightarrow N(0,V)
\ea
in distribution as $n\rightarrow \infty$.
\end{proof}

\subsection{Some auxiliary lemmas}

Here, we describe some auxiliary results.

\begin{lemma}[Some basic properties]\label{lemma:basic}
For any $\Phi\in \Theta_\Phi$, the following properties hold:
\ba
\text{\rm (a) }
&c_L^2 \le \lambda_j(\Phi\Phi^{\T})\;(j=1,\dots,p),\;\;
\text{\rm (b) }
\|\Phi\|_F^2 =\|\Lambda\|_F^2 + \sum_{j=1}^p\sigma_j^2 \le p(c_\Lambda^2  + c_U^2), \;\text{and}\\
\text{\rm (c) }
&\lambda_p(\Phi^{\T}\Sigma_\ast \Phi) = \|\Sigma_\ast^{1/2} \Phi\|_2^2 \ge c_L^4.
\ea
\end{lemma}
\begin{proof}
(a)
For all $x \in \Rb^p$,
\[
x^{\T} (\Phi\Phi^{\T}) x = x^{\T} \Lambda\Lambda^{\T} x + x^{\T} \Psi^2 x
\ge x^{\T} \Psi^2 x = \sum_{j = 1}^p \sigma_j^2x_j^2\ge c_L^2\|x\|_2^2.
\]

\noindent(b) By the definition of $\Phi$,
$
\|\Phi\|_F^2 =\|\Lambda\|_F^2 + \sum_{j=1}^p\sigma_j^2 \le p(c_\Lambda^2  + c_U^2).
$

\noindent(c)
For $x \in \Rb^{m+p}$ with $\|x\|=1$ and $\Phi x \neq 0$, we have
\[
\|\Sigma_\ast^{1/2}\Phi x\| = \left\|\Sigma_\ast^{1/2} \frac{\Phi x}{\|\Phi x\|} \right\|\|\Phi x\|
\ge 
\inf_{\|y\| = 1}  \left\|\Sigma_\ast^{1/2}y\right\|  \inf_{\|x\|=1,\Phi x\neq 0}\|\Phi x\|
\ge c_L^2 > 0.
\]
Thus, we obtain
$
\lambda_p(\Phi^{\T}\Sigma_\ast \Phi) = \|\Sigma_\ast^{1/2} \Phi\|_2^2 \ge c_L^4.
$
\end{proof}

To see the smoothness of $d_{\BW}$, we present a slight extension of Theorem~2 of \cite{Freidlin:1968} for a general matrix function.

\begin{lemma}[Integral form and smoothness of a matrix function]\label{lemma:integral-form}
Let $D\subset \Rb^d$ be the bounded region, 
and consider a symmetric and smooth matrix-valued function $A: \Rb^d \rightarrow \Rb^{p\times p}$.
Suppose that $A(x)$ is non-negative and has a constant rank (say $q$) everywhere in the closure $\bar{D}$ of the domain $D$.
Let $f$ be a function $f:\Cb \rightarrow \Cb$, and suppose that $f$ is analytic except at the origin.
We define 
\[
F(x) = \sum_{j=1}^q f(\lambda_j(x))e_j(x)e_j(x)^{\T},
\]
where $\lambda_j(x)$ and $e_j(x)$ are the $j$th positive eigenvalue and the corresponding eigenvector of $A(x)$, respectively.

Then, the matrix function $F(x)$ has the following integral form
\[
F(x) = \frac{1}{2\pi i} \int_C f(z) \{zI_p - A(x)\}^{-1}\,dz,
\]
where $C$ is a closed loop that lies in the open right half-plane and contains all positive eigenvalues of $A(x)$ inside itself.
Thus, $F(x)$ has the same smoothness as $A(x)$ on $D$.
\end{lemma}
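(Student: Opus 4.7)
The plan is to derive the integral formula from the spectral decomposition of $A(x)$ via Cauchy's integral formula, and then exploit the fact that a single contour $C$ can be chosen uniformly for all $x \in \bar{D}$, so that $F$ inherits the smoothness of $A$ by differentiation under the integral sign.

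First, I would establish a uniform spectral gap. Since $A(x)$ is continuous in $x$, symmetric, non-negative, and has constant rank $q$ everywhere on the compact set $\bar{D}$, the $q$-th largest eigenvalue $\lambda_q(x)$ is a positive continuous function on $\bar{D}$, hence bounded below by some $c_1>0$; the largest eigenvalue is likewise bounded above by some $c_2<\infty$. Thus every positive eigenvalue of $A(x)$ lies in $[c_1,c_2]\subset (0,\infty)$ uniformly in $x\in\bar{D}$. I would then choose $C$ to be a fixed closed contour in the open right half-plane that encloses $[c_1,c_2]$ while keeping $0$ outside — for instance, a rectangle $\{z : c_1/2 \le \mathrm{Re}\,z \le 2c_2,\ |\mathrm{Im}\,z|\le \delta\}$ for some $\delta>0$. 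This $C$ is valid simultaneously for every $x \in \bar{D}$.

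Next, for fixed $x \in D$, I would write the spectral decomposition $A(x) = \sum_{j=1}^q \lambda_j(x) e_j(x) e_j(x)^{T}$ and denote by $P_0(x)$ the orthogonal projection onto $\ker A(x)$. For each $z \in C$ the resolvent expands as
\[
\{zI_p - A(x)\}^{-1} = \sum_{j=1}^{q} \frac{e_j(x)e_j(x)^{T}}{z-\lambda_j(x)} + \frac{P_0(x)}{z}.
\]
Integrating term by term against $f(z)/(2\pi i)$ over $C$, Cauchy's integral formula gives $\frac{1}{2\pi i}\int_C \frac{f(z)}{z-\lambda_j(x)}\,dz = f(\lambda_j(x))$ for each $j=1,\dots,q$, since $\lambda_j(x)\in [c_1,c_2]$ sits inside $C$. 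The kernel contribution $\frac{1}{2\pi i}\int_C \frac{f(z)}{z}\,dz$ vanishes, because $0$ lies outside $C$ and $f(z)/z$ is analytic in a neighborhood of the region bounded by $C$ (which lies in the open right half-plane, where $f$ is analytic). This establishes the integral representation.

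Finally, for smoothness, I would note that $C$ is independent of $x$, and for every $z\in C$ one has the uniform bound $\|(zI_p-A(x))^{-1}\|_2 \le 1/\mathrm{dist}(z,[c_1,c_2]\cup\{0\})$, which is bounded on the compact contour $C$ uniformly in $x \in \bar{D}$. The map $x \mapsto (zI_p - A(x))^{-1}$ is therefore smooth, with derivatives of each order controlled uniformly in $z \in C$. Standard differentiation-under-the-integral arguments then transfer the smoothness of $A$ to $F$ on $D$. The main obstacle is Step 1 — extracting the uniform spectral gap $\lambda_q(x)\ge c_1>0$ from the constant-rank hypothesis on a compact set — after which the rest is a routine application of the holomorphic functional calculus, the only mild subtlety being the possible singularity of $f$ at the origin, which is neutralized by the requirement that $C$ lies in the open right half-plane.
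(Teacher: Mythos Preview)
Your argument is correct and is precisely the standard holomorphic functional calculus proof underlying Freidlin's Theorem~2.2 (Chapter~III of \cite{Freidlin:1985}), which the paper simply cites without details; the uniform spectral gap from compactness and constant rank, the fixed contour in the right half-plane, the resolvent expansion plus Cauchy's formula, and differentiation under the integral are exactly the ingredients of that proof. In short, you have reconstructed the referenced argument faithfully, and there is nothing to correct.
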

\begin{proof}
The proof is almost the same as the proof of Theorem~2.2 of Chapter III in \cite{Freidlin:1985}.
\end{proof}

\end{appendix}

\begin{funding}
The author was supported by JSPS KAKENHI Grant (JP20K19756, JP20H00601, and JP23H03355) 
and  iGCORE Open Collaborative Research.
%
\end{funding}

\bibliographystyle{imsart-nameyear}
\bibliography{paper-ref}

\begin{thebibliography}{43}

\bibitem[\protect\citeauthoryear{Adachi}{2012}]{Adachi:2012}
\begin{barticle}[author]
\bauthor{\bsnm{Adachi},~\bfnm{Kohei}\binits{K.}}
(\byear{2012}).
\btitle{Some contributions to data-fitting factor analysis with empirical
  comparisons to covariance-fitting factor analysis}.
\bjournal{Journal of the Japanese Society of Computational Statistics}
\bvolume{25}
\bpages{25--38}.
\end{barticle}
\endbibitem

\bibitem[\protect\citeauthoryear{Adachi}{2013}]{Adachi:2013}
\begin{barticle}[author]
\bauthor{\bsnm{Adachi},~\bfnm{Kohei}\binits{K.}}
(\byear{2013}).
\btitle{Factor analysis with {EM} algorithm never gives improper solutions when
  sample covariance and initial parameter matrices are proper}.
\bjournal{Psychometrika}
\bvolume{78}
\bpages{380--394}.
\end{barticle}
\endbibitem

\bibitem[\protect\citeauthoryear{Adachi}{2016}]{Adachi:2016}
\begin{bbook}[author]
\bauthor{\bsnm{Adachi},~\bfnm{Kohei}\binits{K.}}
(\byear{2016}).
\btitle{Matrix-based introduction to multivariate data analysis}.
\bpublisher{Springer}.
\end{bbook}
\endbibitem

\bibitem[\protect\citeauthoryear{Adachi}{2022}]{Adachi:2022}
\begin{barticle}[author]
\bauthor{\bsnm{Adachi},~\bfnm{Kohei}\binits{K.}}
(\byear{2022}).
\btitle{Factor Analysis Procedures Revisited from the Comprehensive Model with
  Unique Factors Decomposed into Specific Factors and Errors}.
\bjournal{Psychometrika}
\bvolume{87}
\bpages{967--991}.
\end{barticle}
\endbibitem

\bibitem[\protect\citeauthoryear{Adachi and Trendafilov}{2018}]{Adachi:2018}
\begin{barticle}[author]
\bauthor{\bsnm{Adachi},~\bfnm{Kohei}\binits{K.}} \AND
  \bauthor{\bsnm{Trendafilov},~\bfnm{Nickolay~T}\binits{N.~T.}}
(\byear{2018}).
\btitle{Some mathematical properties of the matrix decomposition solution in
  factor analysis}.
\bjournal{Psychometrika}
\bvolume{83}
\bpages{407--424}.
\end{barticle}
\endbibitem

\bibitem[\protect\citeauthoryear{Anderson}{2003}]{Anderson:2003}
\begin{bbook}[author]
\bauthor{\bsnm{Anderson},~\bfnm{T.~W.}\binits{T.~W.}}
(\byear{2003}).
\btitle{An Introduction to Multivariate Statistical Analysis}.
\bpublisher{Wiley}.
\end{bbook}
\endbibitem

\bibitem[\protect\citeauthoryear{Anderson and Amemiya}{1988}]{Anderson:1988}
\begin{barticle}[author]
\bauthor{\bsnm{Anderson},~\bfnm{Theodore~W}\binits{T.~W.}} \AND
  \bauthor{\bsnm{Amemiya},~\bfnm{Yasuo}\binits{Y.}}
(\byear{1988}).
\btitle{The asymptotic normal distribution of estimators in factor analysis
  under general conditions}.
\bjournal{The Annals of Statistics}
\bvolume{16}
\bpages{759--771}.
\end{barticle}
\endbibitem

\bibitem[\protect\citeauthoryear{Anderson and Rubin}{1956}]{Anderson:1956}
\begin{binproceedings}[author]
\bauthor{\bsnm{Anderson},~\bfnm{TW}\binits{T.}} \AND
  \bauthor{\bsnm{Rubin},~\bfnm{Herman}\binits{H.}}
(\byear{1956}).
\btitle{Statistical Inference in Factor Analysis}.
In \bbooktitle{Proceedings of the Berkeley Symposium on Mathematical Statistics
  and Probability}
\bpages{111}.
\bpublisher{University of California Press}.
\end{binproceedings}
\endbibitem

\bibitem[\protect\citeauthoryear{Bentler and Kano}{1990}]{Bentler:1990}
\begin{barticle}[author]
\bauthor{\bsnm{Bentler},~\bfnm{Peter~M}\binits{P.~M.}} \AND
  \bauthor{\bsnm{Kano},~\bfnm{Yutaka}\binits{Y.}}
(\byear{1990}).
\btitle{On the equivalence of factors and components}.
\bjournal{Multivariate Behavioral Research}
\bvolume{25}
\bpages{67--74}.
\end{barticle}
\endbibitem

\bibitem[\protect\citeauthoryear{Bhatia, Jain and Lim}{2019}]{BhatiaEtAl2019}
\begin{barticle}[author]
\bauthor{\bsnm{Bhatia},~\bfnm{Rajendra}\binits{R.}},
  \bauthor{\bsnm{Jain},~\bfnm{Tanvi}\binits{T.}} \AND
  \bauthor{\bsnm{Lim},~\bfnm{Yongdo}\binits{Y.}}
(\byear{2019}).
\btitle{On the Bures-Wasserstein distance between positive definite matrices}.
\bjournal{Expositiones Mathematicae}
\bvolume{37}
\bpages{165--191}.
\end{barticle}
\endbibitem

\bibitem[\protect\citeauthoryear{Briggs and
  MacCallum}{2003}]{BriggsMacCallum03}
\begin{barticle}[author]
\bauthor{\bsnm{Briggs},~\bfnm{Nancy~E}\binits{N.~E.}} \AND
  \bauthor{\bsnm{MacCallum},~\bfnm{Robert~C}\binits{R.~C.}}
(\byear{2003}).
\btitle{Recovery of weak common factors by maximum likelihood and ordinary
  least squares estimation}.
\bjournal{Multivariate Behavioral Research}
\bvolume{38}
\bpages{25--56}.
\end{barticle}
\endbibitem

\bibitem[\protect\citeauthoryear{Browne}{1974}]{Browne:1974}
\begin{barticle}[author]
\bauthor{\bsnm{Browne},~\bfnm{Michael~W}\binits{M.~W.}}
(\byear{1974}).
\btitle{Generalized least squares estimators in the analysis of covariance
  structures}.
\bjournal{South African Statistical Journal}
\bvolume{8}
\bpages{1--24}.
\end{barticle}
\endbibitem

\bibitem[\protect\citeauthoryear{Cho and Hwang}{2023}]{ChoHwang:23}
\begin{barticle}[author]
\bauthor{\bsnm{Cho},~\bfnm{Gyeongcheol}\binits{G.}} \AND
  \bauthor{\bsnm{Hwang},~\bfnm{Heungsun}\binits{H.}}
(\byear{2023}).
\btitle{Structured factor analysis: A data matrix-based alternative approach to
  structural equation modeling}.
\bjournal{Structural Equation Modeling: A Multidisciplinary Journal}
\bvolume{30}
\bpages{364--377}.
\end{barticle}
\endbibitem

\bibitem[\protect\citeauthoryear{de~Leeuw}{2004}]{deLeeuw:2004}
\begin{bincollection}[author]
\bauthor{\bparticle{de} \bsnm{Leeuw},~\bfnm{Jan}\binits{J.}}
(\byear{2004}).
\btitle{Least squares optimal scaling of partially observed linear systems}.
In \bbooktitle{Recent developments on structural equation models: Theory and
  applications}
\bpages{121--134}.
\bpublisher{Springer}.
\end{bincollection}
\endbibitem

\bibitem[\protect\citeauthoryear{Fan, Liao and Mincheva}{2013}]{Fan:2013}
\begin{barticle}[author]
\bauthor{\bsnm{Fan},~\bfnm{Jianqing}\binits{J.}},
  \bauthor{\bsnm{Liao},~\bfnm{Yuan}\binits{Y.}} \AND
  \bauthor{\bsnm{Mincheva},~\bfnm{Martina}\binits{M.}}
(\byear{2013}).
\btitle{Large covariance estimation by thresholding principal orthogonal
  complements}.
\bjournal{Journal of the Royal Statistical Society Series B: Statistical
  Methodology}
\bvolume{75}
\bpages{603--680}.
\end{barticle}
\endbibitem

\bibitem[\protect\citeauthoryear{Freidlin}{1968}]{Freidlin:1968}
\begin{barticle}[author]
\bauthor{\bsnm{Freidlin},~\bfnm{M.~I.}\binits{M.~I.}}
(\byear{1968}).
\btitle{On the Factorization of Non-Negative Definite Matrices}.
\bjournal{Theory of Probability \& Its Applications}
\bvolume{13}
\bpages{354-356}.
\end{barticle}
\endbibitem

\bibitem[\protect\citeauthoryear{Freidlin}{1985}]{Freidlin:1985}
\begin{bbook}[author]
\bauthor{\bsnm{Freidlin},~\bfnm{Mark~Iosifovich}\binits{M.~I.}}
(\byear{1985}).
\btitle{Functional integration and partial differential equations}.
\bpublisher{Princeton University Press}.
\end{bbook}
\endbibitem

\bibitem[\protect\citeauthoryear{Ihara and Kano}{1986}]{IharaKano:1986}
\begin{barticle}[author]
\bauthor{\bsnm{Ihara},~\bfnm{Masamori}\binits{M.}} \AND
  \bauthor{\bsnm{Kano},~\bfnm{Yutaka}\binits{Y.}}
(\byear{1986}).
\btitle{A new estimator of the uniqueness in factor analysis}.
\bjournal{Psychometrika}
\bvolume{51}
\bpages{563--566}.
\end{barticle}
\endbibitem

\bibitem[\protect\citeauthoryear{J\"{o}reskog and
  Goldberger}{1972}]{Joreskog:1972}
\begin{barticle}[author]
\bauthor{\bsnm{J\"{o}reskog},~\bfnm{K.~G.}\binits{K.~G.}} \AND
  \bauthor{\bsnm{Goldberger},~\bfnm{A.~S.}\binits{A.~S.}}
(\byear{1972}).
\btitle{Factor analysis by generalized least squares}.
\bjournal{Psychometrika}
\bvolume{37}
\bpages{243--260}.
\end{barticle}
\endbibitem

\bibitem[\protect\citeauthoryear{Kano}{1983}]{Kano:1983}
\begin{barticle}[author]
\bauthor{\bsnm{Kano},~\bfnm{Yutaka}\binits{Y.}}
(\byear{1983}).
\btitle{Consistency of estimators in factor analysis}.
\bjournal{Journal of the Japan Statistical Society}
\bvolume{13}
\bpages{137--144}.
\end{barticle}
\endbibitem

\bibitem[\protect\citeauthoryear{Kano}{1986}]{Kano:1986a}
\begin{barticle}[author]
\bauthor{\bsnm{Kano},~\bfnm{Yutaka}\binits{Y.}}
(\byear{1986}).
\btitle{Conditions on consistency of estimators in covariance structure model}.
\bjournal{Journal of the Japan Statistical Society}
\bvolume{16}
\bpages{75--80}.
\end{barticle}
\endbibitem

\bibitem[\protect\citeauthoryear{Kano}{1990}]{Kano:1990}
\begin{barticle}[author]
\bauthor{\bsnm{Kano},~\bfnm{Yutaka}\binits{Y.}}
(\byear{1990}).
\btitle{Noniterative estimation and the choice of the number of factors in
  exploratory factor analysis}.
\bjournal{Psychometrika}
\bvolume{55}
\bpages{277--291}.
\end{barticle}
\endbibitem

\bibitem[\protect\citeauthoryear{Kano}{1991}]{Kano:1991}
\begin{barticle}[author]
\bauthor{\bsnm{Kano},~\bfnm{Yutaka}\binits{Y.}}
(\byear{1991}).
\btitle{The asymptotic distribution of a noniterative estimator in exploratory
  factor analysis}.
\bjournal{The Annals of Statistics}
\bvolume{19}
\bpages{272--282}.
\end{barticle}
\endbibitem

\bibitem[\protect\citeauthoryear{MacCallum, Browne and
  Cai}{2007}]{MacCallumEtAl07}
\begin{bincollection}[author]
\bauthor{\bsnm{MacCallum},~\bfnm{Robert~C}\binits{R.~C.}},
  \bauthor{\bsnm{Browne},~\bfnm{Michael~W}\binits{M.~W.}} \AND
  \bauthor{\bsnm{Cai},~\bfnm{Li}\binits{L.}}
(\byear{2007}).
\btitle{Factor analysis models as approximations}.
In \bbooktitle{Factor analysis at 100: Historical developments and future
  directions}
(\beditor{\bfnm{R.}\binits{R.}~\bsnm{Cudeck}} \AND
  \beditor{\bfnm{R.~C.}\binits{R.~C.}~\bsnm{MacCallum}}, eds.)
\bpages{153--175}.
\bpublisher{Routledge}.
\end{bincollection}
\endbibitem

\bibitem[\protect\citeauthoryear{MacCallum and
  Tucker}{1991}]{MacCallumTucker91}
\begin{barticle}[author]
\bauthor{\bsnm{MacCallum},~\bfnm{Robert~C}\binits{R.~C.}} \AND
  \bauthor{\bsnm{Tucker},~\bfnm{Ledyard~R}\binits{L.~R.}}
(\byear{1991}).
\btitle{Representing sources of error in the common-factor model: Implications
  for theory and practice.}
\bjournal{Psychological Bulletin}
\bvolume{109}
\bpages{502--511}.
\end{barticle}
\endbibitem

\bibitem[\protect\citeauthoryear{Magnus and
  Neudecker}{2019}]{MagnusNeudecker:2019}
\begin{bbook}[author]
\bauthor{\bsnm{Magnus},~\bfnm{Jan~R}\binits{J.~R.}} \AND
  \bauthor{\bsnm{Neudecker},~\bfnm{Heinz}\binits{H.}}
(\byear{2019}).
\btitle{Matrix differential calculus with applications in statistics and
  econometrics}.
\bpublisher{John Wiley \& Sons}.
\end{bbook}
\endbibitem

\bibitem[\protect\citeauthoryear{Murphy and Van~der Vaart}{2000}]{Murphy:2000}
\begin{barticle}[author]
\bauthor{\bsnm{Murphy},~\bfnm{Susan~A}\binits{S.~A.}} \AND
  \bauthor{\bparticle{Van~der} \bsnm{Vaart},~\bfnm{Aad~W}\binits{A.~W.}}
(\byear{2000}).
\btitle{On profile likelihood}.
\bjournal{Journal of the American Statistical Association}
\bvolume{95}
\bpages{449--465}.
\end{barticle}
\endbibitem

\bibitem[\protect\citeauthoryear{Newey}{1994}]{Newey:1994}
\begin{barticle}[author]
\bauthor{\bsnm{Newey},~\bfnm{Whitney~K}\binits{W.~K.}}
(\byear{1994}).
\btitle{The asymptotic variance of semiparametric estimators}.
\bjournal{Econometrica}
\bvolume{62}
\bpages{1349--1382}.
\end{barticle}
\endbibitem

\bibitem[\protect\citeauthoryear{Revelle}{2024}]{psych}
\begin{bmanual}[author]
\bauthor{\bsnm{Revelle},~\bfnm{William}\binits{W.}}
(\byear{2024}).
\btitle{psych: Procedures for Psychological, Psychometric, and Personality
  Research}
\bpublisher{Northwestern University},
\baddress{Evanston, Illinois}
\bnote{R package version 2.4.6}.
\end{bmanual}
\endbibitem

\bibitem[\protect\citeauthoryear{Shapiro}{1983}]{Shapiro:1983}
\begin{barticle}[author]
\bauthor{\bsnm{Shapiro},~\bfnm{Alexander}\binits{A.}}
(\byear{1983}).
\btitle{Asymptotic distribution theory in the analysis of covariance
  structures}.
\bjournal{South African Statistical Journal}
\bvolume{17}
\bpages{33--81}.
\end{barticle}
\endbibitem

\bibitem[\protect\citeauthoryear{Shapiro}{1984}]{Shapiro:1984}
\begin{barticle}[author]
\bauthor{\bsnm{Shapiro},~\bfnm{Alexander}\binits{A.}}
(\byear{1984}).
\btitle{A note on the consistency of estimators in the analysis of moment
  structures}.
\bjournal{British Journal of Mathematical and Statistical Psychology}
\bvolume{37}
\bpages{84--88}.
\end{barticle}
\endbibitem

\bibitem[\protect\citeauthoryear{Shapiro}{1985a}]{Shapiro:1985a}
\begin{barticle}[author]
\bauthor{\bsnm{Shapiro},~\bfnm{Alexander}\binits{A.}}
(\byear{1985}a).
\btitle{Asymptotic distribution of test statistics in the analysis of moment
  structures under inequality constraints}.
\bjournal{Biometrika}
\bvolume{72}
\bpages{133--144}.
\end{barticle}
\endbibitem

\bibitem[\protect\citeauthoryear{Shapiro}{1985b}]{Shapiro:1985b}
\begin{barticle}[author]
\bauthor{\bsnm{Shapiro},~\bfnm{Alexander}\binits{A.}}
(\byear{1985}b).
\btitle{Asymptotic equivalence of minimum discrepancy function estimators to
  {G.L.S.} estimators}.
\bjournal{South African Statistical Journal}
\bvolume{19}
\bpages{73--81}.
\end{barticle}
\endbibitem

\bibitem[\protect\citeauthoryear{Shapiro and ten
  Berge}{2002}]{ShapirotenBerge:2002}
\begin{barticle}[author]
\bauthor{\bsnm{Shapiro},~\bfnm{Alexander}\binits{A.}} \AND
  \bauthor{\bparticle{ten} \bsnm{Berge},~\bfnm{Jos~MF}\binits{J.~M.}}
(\byear{2002}).
\btitle{Statistical inference of minimum rank factor analysis}.
\bjournal{Psychometrika}
\bvolume{67}
\bpages{79--94}.
\end{barticle}
\endbibitem

\bibitem[\protect\citeauthoryear{Socan}{2003}]{Socan:2003}
\begin{bphdthesis}[author]
\bauthor{\bsnm{Socan},~\bfnm{G.}\binits{G.}}
(\byear{2003}).
\btitle{The incremental value of minimum rank factor analysis.},
\btype{{PhD} dissertation},
\bpublisher{University of Groningen}.
\end{bphdthesis}
\endbibitem

\bibitem[\protect\citeauthoryear{Stegeman}{2016}]{Stegeman:2016}
\begin{barticle}[author]
\bauthor{\bsnm{Stegeman},~\bfnm{Alwin}\binits{A.}}
(\byear{2016}).
\btitle{A new method for simultaneous estimation of the factor model
  parameters, factor scores, and unique parts}.
\bjournal{Computational statistics \& data analysis}
\bvolume{99}
\bpages{189--203}.
\end{barticle}
\endbibitem

\bibitem[\protect\citeauthoryear{ten Berge and
  Kiers}{1991}]{tenBergeKiers:1991}
\begin{barticle}[author]
\bauthor{\bparticle{ten} \bsnm{Berge},~\bfnm{Jos~MF}\binits{J.~M.}} \AND
  \bauthor{\bsnm{Kiers},~\bfnm{Henk~AL}\binits{H.~A.}}
(\byear{1991}).
\btitle{A numerical approach to the approximate and the exact minimum rank of a
  covariance matrix}.
\bjournal{Psychometrika}
\bvolume{56}
\bpages{309--315}.
\end{barticle}
\endbibitem

\bibitem[\protect\citeauthoryear{Trendafilov and
  Unkel}{2011}]{Trendafilov:2011}
\begin{barticle}[author]
\bauthor{\bsnm{Trendafilov},~\bfnm{Nickolay~T}\binits{N.~T.}} \AND
  \bauthor{\bsnm{Unkel},~\bfnm{Steffen}\binits{S.}}
(\byear{2011}).
\btitle{Exploratory factor analysis of data matrices with more variables than
  observations}.
\bjournal{Journal of Computational and Graphical Statistics}
\bvolume{20}
\bpages{874--891}.
\end{barticle}
\endbibitem

\bibitem[\protect\citeauthoryear{Trendafilov, Unkel and
  Krzanowski}{2013}]{Trendafilov:2013}
\begin{barticle}[author]
\bauthor{\bsnm{Trendafilov},~\bfnm{Nickolay~T}\binits{N.~T.}},
  \bauthor{\bsnm{Unkel},~\bfnm{Steffen}\binits{S.}} \AND
  \bauthor{\bsnm{Krzanowski},~\bfnm{Wojtek}\binits{W.}}
(\byear{2013}).
\btitle{Exploratory factor and principal component analyses: some new aspects}.
\bjournal{Statistics and Computing}
\bvolume{23}
\bpages{209--220}.
\end{barticle}
\endbibitem

\bibitem[\protect\citeauthoryear{Tucker, Koopman and
  Linn}{1969}]{TuckerEtAl:1969}
\begin{barticle}[author]
\bauthor{\bsnm{Tucker},~\bfnm{Ledyard~R}\binits{L.~R.}},
  \bauthor{\bsnm{Koopman},~\bfnm{Raymond~F}\binits{R.~F.}} \AND
  \bauthor{\bsnm{Linn},~\bfnm{Robert~L}\binits{R.~L.}}
(\byear{1969}).
\btitle{Evaluation of factor analytic research procedures by means of simulated
  correlation matrices}.
\bjournal{Psychometrika}
\bvolume{34}
\bpages{421--459}.
\end{barticle}
\endbibitem

\bibitem[\protect\citeauthoryear{Unkel and Trendafilov}{2010}]{Unkel:2010}
\begin{barticle}[author]
\bauthor{\bsnm{Unkel},~\bfnm{Steffen}\binits{S.}} \AND
  \bauthor{\bsnm{Trendafilov},~\bfnm{Nickolay~T}\binits{N.~T.}}
(\byear{2010}).
\btitle{Simultaneous parameter estimation in exploratory factor analysis: An
  expository review}.
\bjournal{International Statistical Review}
\bvolume{78}
\bpages{363--382}.
\end{barticle}
\endbibitem

\bibitem[\protect\citeauthoryear{Waller}{2024}]{fungible}
\begin{bmanual}[author]
\bauthor{\bsnm{Waller},~\bfnm{Niels~G.}\binits{N.~G.}}
(\byear{2024}).
\btitle{fungible: Psychometric Functions from the Waller Lab}
\bpublisher{University of Minnesota},
\baddress{Minneapolis, Minnesota}
\bnote{R package 2.4.4}.
\end{bmanual}
\endbibitem

\bibitem[\protect\citeauthoryear{Yamashita}{2024}]{Yamashita:24}
\begin{barticle}[author]
\bauthor{\bsnm{Yamashita},~\bfnm{Naoto}\binits{N.}}
(\byear{2024}).
\btitle{Matrix Decomposition Approach for Structural Equation Modeling as an
  Alternative to Covariance Structure Analysis and Its Theoretical Properties}.
\bjournal{Structural Equation Modeling: A Multidisciplinary Journal}
\bvolume{31}
\bpages{817--834}.
\end{barticle}
\endbibitem

\end{thebibliography}

\end{document}